\author{Nicolas Papadakis\thanks{IMB, Universit\'e Bordeaux 1, 351, cours de la Lib\'eration, F-33405 TALENCE, FRANCE.}
\and Gabriel Peyr\'e\thanks{CEREMADE, Universit\'e Paris-Dauphine, Place du Marechal De Lattre De Tassigny, 75775 PARIS CEDEX 16, FRANCE.}
\and Edouard Oudet\thanks{LJK, Universit\'e de Grenoble, 51 rue des Math\'ematiques, Campus de Saint Martin d'H\`eres, BP 53, 38041 GRENOBLE CEDEX 09}
}
\author{\listauthors{\auta}
	 \quad \listauthors{\autb}\\\\
	 \quad \listauthors{\autc}
	\
}
\title{Optimal Transport with  Proximal Splitting}
\begin{document}

\maketitle

\begin{abstract}
	This article reviews the use of first order convex optimization schemes to solve the discretized dynamic optimal transport problem, initially proposed by Benamou and Brenier. We develop a staggered grid discretization that is well adapted to the computation of the $L^2$ optimal transport geodesic between distributions defined on a uniform spatial grid. We show how proximal splitting schemes can be used to solve the resulting large scale convex optimization problem. A specific instantiation of this method on a centered grid corresponds to the initial algorithm developed by Benamou and Brenier. We also show how more general cost functions can be taken into account and how to extend the method to perform optimal transport on a Riemannian manifold. 
\end{abstract}

\section{Introduction}

Optimal transport is a well developed mathematical theory that defines a family of metrics between probability distributions~\cite{Villani03}. These metrics measure the amplitude of an optimal displacement according to a so-called ground cost defined on the space supporting the distributions. The resulting distance is sometimes referred to as the Wasserstein distance in the case of $L^p$ ground costs. The geometric nature of optimal transportation, as well as the ability to compute optimal displacements between densities, make this theory progressively mainstream in several applicative fields (see bellow). However, the numerical resolution of the optimal transportation problem raises several challenges. This article is focused on the  computation of geodesics for the optimal transport metric associated to the $L^2$ cost. It reviews and extends the approach pioneered by Benamou and Brenier~\cite{Benamou2000} from the perspective of proximal operator splitting in convex optimization. This shows the simplicity and efficiency of this method, which can easily be extended beyond the setting of optimal transport by considering various convex cost functions.

\subsection{Previous Works} 
\label{sec-previous}

\paragraph{Applications of optimal transport}

Early successes of applications of optimal transport were mostly theoretical, such as for instance the derivation of functional inequalities~\cite{CorderoErausquin-sobolev} or the construction of solutions of some non-linear partial differential equations~\cite{jko}. But recently, computational optimal transport as gained much interest and is progressively becoming mainstream in several applicative fields. In computer vision, the Wasserstein distance has been shown to outperform other metrics between distributions for  machine learning tasks~\cite{Rubner1998,Pele-ICCV}. In image processing, the warping provided by the optimal transport has been used for video restoration~\cite{Delon-midway}, color transfer~\cite{Papadakis-colors}, texture synthesis~\cite{2013-ssvm-mixing} and medical imaging registration~\cite{haker-ijcv}. It has also been applied to interpolation in computer graphics~\cite{Bonneel-displacement} and surface reconstruction in computational geometry~\cite{digne-reconstruction}. Optimal transport is also used to model various physical phenomena, such as for instance in astrophysics \cite{Frisch-nature} and oceanography~\cite{Benamou-Semigeostrophic}.

\paragraph{Discrete optimal transport}

The easiest way to discretize and compute numerically optimal transports is to consider finite sums of weighted Diracs. In this specific case, the optimal transport is a multi-valued map between the Diracs locations.   Specific linear solvers can be used in this context and in particular network and transportation simplices~\cite{Dantzig-Book} can scale up to a few thousands of Dirac masses. Dedicated combinatorial optimization methods have been proposed, such as the auction algorithm~\cite{Bertsekas1988}, which can handle integer costs between the Diracs. In the even more restricted case of two distributions with the same number of Diracs with equal weights, the transportation is a bijection between the points and thus corresponds to the optimal assignment problem~\cite{Burkard09}. Combinatorial optimization methods such as the Hungarian algorithm~\cite{Kuhn-hungarian} have roughly cubic complexity in the number of Diracs. Faster schemes exist for specific cost functions, such as for instance convex cost of the distance on the line (where it boils down to a sorting of the positions) and the circle~\cite{delon-circle}, concave costs on the line~\cite{delon-concave}, the $\ell^1$ distance~\cite{Ling2007}. The computation can be accelerated using multi-scale clustering~\cite{Merigot2011}. Note also that various approximations of the transportation distance have been proposed, see for instance~\cite{Shirdhonkar2008}.

Despite being numerically intensive for finely discretized distributions, this discrete transport framework has found many applications, such as for instance color transfer between images~\cite{Rabin2011}, shape retrieval~\cite{Rubner1998}, surface reconstruction~\cite{deGoes2011} and interpolation for computer graphics~\cite{Bonneel-displacement}

\medskip
\paragraph{Optimal transport and PDE's}

The optimal transport for the $L^2$ ground cost has a special structure. It can be shown to be uniquely defined and to be the gradient of a convex function~\cite{Brenier1991}. This implies that it is also the solution of the fully non-linear Monge-Amp\`ere partial differential equation.
Several methods have been proposed to discretize and solve this PDE, such as for instance the method of~\cite{Oliker-Prussner-1988} which converges to the Aleksandrov solution and the one of~\cite{Oberman-2008} which converges to the viscosity solution of the equation. Alternative methods such as~\cite{Dean-Glowinski-2006} and~\cite{Feng-Neilan-2009} are efficient for regular densities.  A major difficulty in these approaches is to deal with compactly supported densities, which requires a careful handing of the boundary conditions. \cite{Froese2012} proposes to enforce these conditions by iteratively solving a Monge-Ampere equation with Neumann boundary conditions. \cite{Benamou2012} introduces a method requiring the solution of a well-posed Hamilton-Jacobi equation.
Another line of methods iteratively constructs mass preserving mappings converging to the optimal transport~\cite{Angenent2003}. This explicitly constructs the so-called polar factorization of the initial map, see also~\cite{Benamou1995} for a different approach. This method is enhanced in~\cite{Haber2010} to avoid drifting from the preservation constraint during the iterations.
These PDE's based approaches to the resolution of the optimal transport have found several applications,  such as image registration~\cite{haker-ijcv}, density regularization~\cite{Burger2011}, optical flow~\cite{Clarysse2010} and grid generation~\cite{Sulman2011b}.

Another line of research consists in using gradient flows where the gradient direction is computed according to the Wasserstein distance. This was initially proposed in~\cite{jko} to build solutions to certain non-linear PDE's. This technique is now being used to design numerical approximation schemes for the solution of these equations, see for instance~\cite{Carrillo-diffusive,During-gradient-flow,Ferragut-mixed-fem}.

\medskip
\paragraph{Dynamical optimal transport}

Instead of computing directly the transport, it is possible to consider the geodesic path between the two densities according to the Wasserstein metric (the so-called displacement interpolation~\cite{mccann1997convexity}). For the $L^2$ ground cost, this geodesic is obtained by linear interpolation between the identity and the transport. The geodesic can thus be computed by first obtaining the transport and then evolving the densities. If one considers discrete sums of Diracs, this corresponds to solving a convex linear program and can also be understood as a Lagrangian approximation of the transport between (possibly continuous) densities that have been discretized. This approach is refined in~\cite{Iollo2011}, which considers discretization with mixture of Gaussians.

It is also possible to consider an Eulerian formulation of the geodesic problem, for which densities along the path are discretized on a fixed spatial grid. Conservation of mass is achieved by introducing an incompressible velocity field transporting the densities. The breakthrough paper~\cite{Benamou2000} shows that it is possible to perform a change of variable to obtain a convex problem. They propose to solve numerically the discretized problem with a first order iterative method. We give further details bellow on this method in the paragraph on proximal methods.  

Geodesics between pairs of distributions can be extended to barycenters between an arbitrary finite collection of distributions. Existence and uniqueness of this barycenter is studied in~\cite{Carlier_wasserstein_barycenter}. Computing the barycenter between discrete distributions requires the resolution of a convex linear program that corresponds to a multi-marginal optimal transportation, as proved in~\cite{Carlier_wasserstein_barycenter}. However, in sharp contrast with the case of two distributions, the special case of un-weighted sums of Diracs is not anymore equivalent to an assignment problem, which is known to be NP-hard~\cite{Burkard09}. Computing numerically this barycenter for large scale problems can however be tackled using a non-convex formulation to solve for a Lagrangian discretization, which finds applications in image processing~\cite{Rabin_ssvm11}.

\medskip
\paragraph{Generalized transport problems}

The formulation of the geodesic computation as a convex optimization problem initiated by~\cite{Benamou2000} enables the definition of various metrics obtained by changing the objective function. A penalization of the matching constraint~\cite{Benamou2010} allows one to compute an unbalanced transport where densities are not normalized to have the same mass. An interpolation between the $L^2$-Wasserstein and $L^2$ distances is proposed in~\cite{Benamou2001}. Lastly, an interpolation between $L^2$-Wasserstein and $H^{-1}$ distances is described in~\cite{dolbeault2009}. This extension relies in a crucial manner on the convexity of the extended objective function, which enables a theoretical analysis to characterize minimizing geodesics~\cite{Cardaliaguet2012}. Convexity also allows one to use the numerical scheme we propose with only slight modifications with respect to the $L^2$-Wasserstein case, as we detail in Section~\ref{sec-generalized}. 

\medskip
\paragraph{Optimal transport on Riemannian manifolds}

Many properties of the $L^2$-Was\-ser\-stein distance extend to the setting where the ground cost is the square of the geodesic distance on a Riemannian manifold. This includes in particular the existence and uniqueness of the transport map, which is the manifold exponential of the gradient of a semi-convex map~\cite{McCann-PolarManifold}. Displacement interpolation for transport on manifolds has the same variational characterization as the one introduced in~\cite{Benamou2001} for Euclidean transport, see~\cite{Villani-OldNew} for a detailed review of optimal transport on manifolds. Interpolation between pairs of measures generalizes to barycenters of a family of measures, see~\cite{KimPass-MultiMarg-Manifold}.

Displacement interpolation between two measures composed, each one composed of a single Dirac, amounts to computing a single geodesic curve on the manifold. Discretization and numerical solutions to this problem are numerous. A popular method is the Fast Marching algorithm introduced jointly by~\cite{sethianFM1995,tsitsiklis-TAC} for isotropic Riemannian metrics (i.e. when the metric at each point is a scalar multiple of the identity) discretized on a rectangular grid. The complexity of the method is $O(N \log(N))$ operations, where $N$ is the number of grid points. This algorithm has been extended to compute geodesics on 2-D triangular meshes with only acute angles~\cite{sethian-geodesic-path}. More general discretizations and the extension to Finsler metrics require the use of slower iterative schemes, see for instance~\cite{bornemann-fm}. 

Computing numerically optimal transport on manifolds has been less studied. For weighted sums of Diracs, displacement interpolation is achieved by solving the linear program to compute the coupling between the Diracs and then advancing the Diracs with the corresponding weights and constant velocity along the geodesics. In this article, we propose to extend the Eulerian discretization method~\cite{Benamou2001} to solve for the displacement interpolation on a Riemannian manifold.

\medskip
\paragraph{First order and proximal methods}

The convex problem considered by Benamou and Brenier~\cite{Benamou2000} can be re-casted as the optimization of a linear  functional under second order conic constraints (see Section~\ref{sec-socp} for more details). This class of programs can be solved in time polynomial with the desired accuracy using interior points methods, see for instance~\cite{Nesterov-Nemirovsky-Book}. 

However, the special structure of the problem, especially when discretized on an uniform grid, makes its suitable for first order schemes and in particular proximal splitting methods. While they do not reach the same convergence speed for arbitrary conic programs, they work well in practice for large scale problems, in particular when high accuracy is not mandatory, which is a common setup for problems in image processing. Proximal splitting schemes are first order optimization methods that allows one to minimize a sum of so-called ``simple'' functionals, possibly (for some methods) pre-composed by linear operators. A functional is called ``simple'' when it is possible to compute its proximal operator (see expression~\ref{eq:def_prox} for its precise definition) either in closed form, or with high accuracy using a few iterations of some sub-routine. In this article, we focus our attention to the Douglas-Rachford algorithm, introduced by~\cite{Lions-Mercier-DR} and on primal-dual methods. We make use of the recently proposed method~\cite{Chambolle2011}, but other schemes could be used as well, see for instance~\cite{Briceno-Arias-PD}. We refer the reader to~\cite{combettes-pesquet-review} and the references therein for more information about the properties of proximal maps and the associated proximal splitting schemes. 

Note that the ALG2 algorithm proposed by~\cite{Benamou2000} corresponds to applying the  Alternating Direction Method of Multiplier (ADDM)~\cite{Fortin1983} to the Fenchel-Rockafeller dual of the (primal) dynamical transport problem. As shown by~\cite{Gabay83,Eckstein1992}, this corresponds exactly to applying directly (a specific instanciation of) the Douglas-Rachford method to the primal problem, see Section~\ref{subsec-addm-centered-grid} for more details.

\medskip
\paragraph{Fluid mechanics discretization}

While Lagrangian methods utilize a mesh-free discretization (see for instance~\cite{Iollo2011}), thats typically tracks the movement of centers of masses during the transportation, Eulerian methods require a fixed discretization of the spatial domain. The most straightforward strategy is to use an uniform centered discretization of an axis-aligned domain, which is used in most previously cited works, see for instance~\cite{Benamou2000,Angenent2003}. Because of the close connection between dynamical optimal transport and fluid dynamics, we advocate in this article the use of staggered grids~\cite{anderson-cfd}, which better cope with the incompressibility condition.

\subsection{Contributions}

Our first contribution is to show how the method initially proposed in~\cite{Benamou2000} is a specific instance of the Douglas-Rachford algorithm. This allows one to use several variations on the initial method, by changing the values of the two relaxation parameters and using different proximal splittings of the functional (possibly introducing auxiliary variables). Our second contribution is the introduction of a staggered grid discretization which is an efficient and convenient way to enforce incompressibility constraints. We show how this discretization  fits into our proximal splitting methodology by introducing an interpolation operator and either making use of auxiliary variables or primal-dual methods. Our last contribution includes an exploration of several variations on the original convex transportation objective, the one proposed in~\cite{dolbeault2009} and a spatially varying penalization which can be interpreted as replacing the $L^2$ ground cost by a geodesic distance on a Riemannian manifold.  Note that the Matlab source code to reproduce the figures of this article is available online\footnote{\url{https://github.com/gpeyre/2013-SIIMS-ot-splitting/}}.


\section{Dynamical Optimal Transport Formulation}

\subsection{Optimal Transport}

In the following, we restrict our exposition to smooth maps $T : [0,1]^d \mapsto [0,1]^d$ where $d>0$ is the dimension of the problem. A valid transport map $T$ is a map that  pushes forward the measure $f^0(x) \d x$ onto $f^1(x) \d x$. In term of densities, this corresponds to
the constraint
\eq{ 
	f^0(x) = f^1(T(x)) \abs{\det( \partial T(x) )} 
}
where $\partial T(x) \in \RR^{d \times d}$ is the differential of $T$
at $x$. This is known as the gradient equation. We call $\Tt(f^0,f^1)$ the set of transport that satisfies this constraint.
An optimal transport $T$ solves
\eql{\label{eq-ot} 
	\umin{T \in \Tt(f^0,f^1) } \int C(x,T(x)) \d x 
}
where $C(x,y) \geq 0$ is the cost of assigning $x \in [0,1]^d$ to $y \in [0,1]^d$. 
In the case $C(x,y)=\norm{x-y}^2$, the optimal value of~\eqref{eq-ot}, the so-called optimal transport distance, is often called the  $L^2$-Wasserstein distance between the densities $f^0$ and $f^1$.

\subsection{Fluid Mechanics Formulation}

The geodesic path between the measures with densities $f^0(x)$ and $f^1(x)$ can be shown to have density $t \mapsto f(x,t)$ where $t \in [0,1]$ parameterizes the path, where 
\eq{ 
	f(x,t) = f^0( T_t(x) ) 
   \abs{\det( \partial T_t(x) )}
   \qwhereq 
   T_t = (1-t) \text{Id}_d + t T. 
} 
Benamou and Brenier showed in~\cite{Benamou2000} that this geodesic solves the following 
non-convex problem over the densities $f(x,t) \in \RR$ and a velocity field $v(x,t) \in \RR^2$
\eql{\label{eq-bb-continuous_velocity} 
	\umin{ (v,f) \in \Cinc^0 } \frac12\int_{[0,1]^d} \int_0^1 f(x,t)\norm{v(x,t)}^2 \d t \d x, 
}
under the set of non-linear constraints
\eql{\label{eq-inc-constr} 
	\Cinc^0 = \enscond{(v,f)}{\partial_t f+ \diverg_x(fv) = 0, 
      \;  v(0,\cdot)=v(1,\cdot)=0, \;  f(\cdot,0)=f^0, \;  f(\cdot,1)=f^1  }. 
}
where the first relation in $\Cinc^0$ is the continuity equation. We impose homogeneous Neumann conditions on the velocity field $v$ which are the more natural boundary condition in the case of the square. Notice that both Neumann and Dirichlet boundary conditions can easily be implemented in our framework. The difference relies in the projection step on the divergence constraint. This step which is carried out using the Fast Fourier Transform algorithm, has to be adapted depending on the chosen boundary conditions.  We refer to~\cite{Froese2012,Benamou2012} for relevant boundary conditions for other convex geometries. The temporal boundary constraints on $f$ impose a match with the input density data.

From a theoretical point of view, the natural setting to prove existence of minimizers of~\eqref{eq-bb-continuous_velocity} is to relax the variational problem and perform the optimization over the Banach space of Radon measures (i.e. finite Borel measures). It is a sub-space of the space of distributions and the incompressibility constraint~\eqref{eq-inc-constr} should be understood in the sense of distributions. We refer the interested reader to~\cite{Cardaliaguet2012} for more details regarding the theoretical analysis of a class of variational problems generalizing~\eqref{eq-bb-continuous_velocity}.  

Note that once an optimal vector field $v$ solving~\eqref{eq-bb-continuous_velocity} has been computed, it is possible to recover an optimal transport $T$ by integrating the flow in time. From a given $x \in [0,1]^d$, we define the solution $t \mapsto T_t(x)$ solving 
\eq{
	T_0(x) = x
	\qandq
	\foralls t > 0, \quad \pd{T_t(x)}{t} = v( T_t(x), t ).
}
The optimal transport is then obtained at $t=1$, i.e. $T = T_1$, see~\cite{Benamou2000} for more details. 

Following~\cite{Benamou2000}, introducing the change of variable $(v,f) \mapsto (m,f)$, where $m$ is the momentum $m = f v$, one obtains a convex optimization problem over the couple $(f,m)$
\eql{\label{eq-bb-continuous} 
	\umin{ (m,f) \in \Cinc } 
	\Jfunc(m,f) = \int_{[0,1]^d} \int_0^1 \jfunc( m(x,t), f(x,t) ) \d t \d x, 
}
\eql{\label{eq-j-func}
	\qwhereq \foralls ( m,  f) \in \RR^d \times \RR, \quad
	\jfunc( m, f) = 
	\left\{\begin{array}{cl}
		\frac{\norm{ m}^2}{2f} &\text{if } f>0, \\
		0 &\text{if } (m,  f) = (0,0), \\
		+\infty &\text{otherwise}.\end{array}\right.	
}
and the set of linear constraints reads
\eq{ 
	\Cinc = \enscond{(m,f)}{ \partial_t f +\diverg_x(m)= 0, 
      \; m(0,\cdot)=m(1,\cdot)=0, \;  f(\cdot,0)=f^0, \; f(\cdot,1)=f^1  }. 
}

\if 0
Note that this convex program is challenging because:
\begin{rs}
	\item The functional $J$ tends to zero when $f(x,t)$ tends to
infinity at some points, so that it is not coercive, which makes the
proof of existence of minimizers non-trivial.
	\item The functional $J$ tends to infinity when $f(x,t)$ tends to
zero at some points which makes the use of gradient descent methods impossible
(its gradient is not Lipschitz). 
\end{rs}
\fi

\section{Discretized Dynamic Optimal Transport}

For simplicity of exposure, we describe the discretization for the 1-D case. It extends verbatim to higher dimensional discretization $d>1$.

\subsection{Centered Grid}

We denote $N+1$ the number of discretization points in space, and $P+1$ the number of discretization points in time. We introduce the centered grid discretizing the space-time square $[0,1]^2$ in $(N+1)\times(P+1)$ points as
\eq{
	\Gc = \enscond{ (x_i=i/N ,\, t_j=j/P) \in [0,1]^2 }{ 0 \leq i \leq N, 0 \leq j \leq P }.
}
We denote 
\eq{
	V=(m,f) \in \Ec=(m_{i,j}, f_{i,j})_{ 0 \leq i \leq N }^{ 0 \leq j \leq P }
}
the variables discretized on the centered grid, where $\Ec = (\RR^{d+1})^{\Gc}= (\RR^{2})^{\Gc}$ 
is the finite dimensional space of centered variables. 

\subsection{Staggered Grid}

The use of a staggered grid is very natural in the context of the discretization of a divergence operator associated to a vector field $u=(u_i)_{i=1}^d$ on $\RR^d$ (we focus on the case $d=2$ here). The basic idea is to allow an accurate evaluation of every partial derivative $\partial_{x_i} u_i(P)$ at prescribed nodes  $P$ of a cartesian grid  using standard centered finite differences. One way to perform this computation is to impose to the grid on which the $u_i$ scalar field to be centered on $P$ points along the $x_i$ direction. This simple requirement forces the $u_i$ scalar field to be defined on different grids. The resulting  discrete vector field gives us the possibility to evaluate the divergence operator by a uniform standard centered scheme which is not possible using a single grid of discretization for every component of $(u_i)_i$. As a consequence, similarly to the discretization of PDE's in incompressible fluid dynamics (see for instance~\cite{Harlow1965}), we consider a staggered grid discretization which is more relevant to deal with the continuity equation, and is defined as
\begin{align*}
	\Gs^x \hspace{-0.05cm}&=\hspace{-0.1cm} \enscond{ \left(x_{i}=(i+1/2)/N ,\, t_j=j/P\right) \in \frac{[-1,2N+1]}{2N}\times[0,1]} {\hspace{-0.15cm}-1 \leq i \leq  N, 0 \leq j \leq  P },\\
	\Gs^t \hspace{-0.05cm}&=\hspace{-0.1cm} \enscond{ \left(x_i=i/N,\, t_{j}=(j+1/2)/P\right) \in [0,1]\times\frac{[-1,2P+1]}{2P}}{ 0 \leq i \leq  N, -1 \leq j \leq  P }.
\end{align*}

From these definitions, we see that $\Gs^x$ contains $(N+2)\times(P+1)$ points and $\Gs^t$ corresponds to a $(N+1)\times(P+2)$ discretization. We finally denote 
\eq{
	U=(\bar m,\bar f) \in \Es = ( (\bar m_{i,j})_{ -1 \leq i \leq  N }^{ 0 \leq j \leq  P },
	 (\bar f_{i,j})_{ 0 \leq i \leq  N }^{ -1 \leq j \leq  P } )
}
the variables discretized on the staggered grid, where $\Es = \RR^{\Gs^x} \times \RR^{\Gs^t}$ is the finite dimensional space of staggered variables.

\subsection{Interpolation  and Divergence Operators}

We introduce a midpoint interpolation operator $\interp : \Es \rightarrow \Ec$,  where, for $U = (\bar m,\bar f) \in \Es$, we define  $\interp(U) = (m,f) \in \Ec$  as
\eql{\label{eq-defn-interp}    
	\foralls 0 \leq i\leq  N, \quad \foralls 0 \leq j \leq  P, \quad
	\left\{\begin{array}{cl}
		m_{i,j} &= (\bar m_{i-1,j} + \bar m_{i,j})/2,  \\
  		f_{i,j} &= (\bar f_{i,j-1} + \bar f_{i,j})/2.
\end{array}\right.
}

The space-time divergence operator $\diverg : \Es \rightarrow \RR^{\Gc}$ is defined, for $U = (\bar m,\bar f) \in \Es$ as
\eq{
	\foralls 0 \leq i \leq  N, \: \foralls 0 \leq j \leq  P, \quad
	\diverg(U)_{i,j} = 
	N(\bar m_{i,j} - \bar m_{i-1,j}) + 
	P(\bar f_{i,j} - \bar f_{i,j-1}).
}

\subsection{Boundary Constraints}

We extract the boundary values on the staggered grid using the linear operator $b$, defined, for $U = (\bar m,\bar f) \in \Es$ as
\eq{
	b(U) = \left( (\bar m_{-1,j}, \bar m_{N,j})_{j=0}^{P}, (\bar f_{i,-1}, \bar f_{i,P} )_{i=0}^{N}\right)
	\in \RR^{P+1}\times\RR^{P+1} \times\RR^{N+1}\times\RR^{N+1}.
}
We impose the following boundary values 
\eq{
	b(U)=b_0
	\qwhereq
	b_0 = (0,0,f^0,f^1) \in \RR^{P+1}\times\RR^{P+1} \times \RR^{N+1} \times \RR^{N+1},
}
where $f^0,f^1 \in \RR^{N+1}$ are the discretized initial (time $t=0$) and final (time $t=1$) densities and the spatial boundary constraint $0\in  \RR^{P}$ on the momentum $m=fv$ comes from the discretized  Neumann boundary conditions on the velocity field $v$.  Notice that in the 2-D or 3-D  cases, Neumann and Dirichlet Boundary conditions do not coincide. For instance in 2-D, Neumann conditions are equivalent to force the first component of $\bar m$ to be zero only on the vertical segments of the boundary whereas the second component vanishes only on the horizontal ones.

\subsection{Discrete Convex Problem}

The initial continuous problem~\eqref{eq-bb-continuous} is approximated on the discretization grid by solving the finite dimensional convex problem
\eql{ \label{eq-optim-bb-disc}
	\umin{U \in \Es} \Jfunc(\interp(U)) + \iota_\Cinc(U). 
}
Here, for a closed convex set $\Cc$, we have denoted its associated indicator function
\eq{
	\iota_\Cc(U) = 
	\left\{\begin{array}{cl}
		0 &\text{if } U \in \Cc,\\
		+\infty &\text{otherwise.}
	\end{array}\right.
}

The discrete objective functional $\Jfunc$ reads for $V=(m,f) \in \Ec$:
\eql{\label{eq-jfunc-totale}
	\Jfunc(V) = \sum_{k \in \Gc} \jfunc(m_k,f_k),
}	
where we denote $k=(i,j) \in \Gc$ the indexes on the centered grid, and the functional $\jfunc$ is defined in~\eqref{eq-j-func}.

The constraint set $\Cinc$ corresponds to the divergence-free constraint together with the boundary constraints
\eq{
	\Cinc = \enscond{U \in \Gs}{ \diverg(U) = 0 \qandq b(U)=b_0}.
}

\subsection{Second Order Cone Programming Formulation}
\label{sec-socp}

The discretized problem~\eqref{eq-optim-bb-disc} can be equivalently solved as the following minimization over variables $(U,V,W) \in \Es \times \Ec \times \RR^{\Gc}$
\eql{\label{eq-formulation-socp}
	\umin{ (V,W) \in \Kk,  V=\Ii(U) } \sum_{k \in \Gc} W_k
}
where $\Kk$ is the product of (rotated) Lorentz cones 
\eq{
	\Kk = \enscond{ (V = (\bar m,\bar f)),W) \in \Ec \times \RR^{\Gc} }{ \foralls k \in \Gc, \; \norm{\bar m_k}^2 - W_k \bar f_k \leq 0 }.
}
Problem~\eqref{eq-formulation-socp} is a specific instance of so-called second-order cone program (SOCP), that can be solved in time polynomial with the accuracy using interior point methods (see Section~\ref{sec-previous} for more details). As already mentioned in Section~\ref{sec-previous}, we focus in this article on proximal splitting methods, that are more adapted to large scale imaging problems.

\section{Proximal Splitting Algorithms}
\label{sec-proximal}

In this section, we review some splitting schemes and detail how they can be applied to solve~\eqref{eq-optim-bb-disc}. This requires to compute the proximal operators of the cost function $J$ and of indicator of the constraint set $\Cinc$. 

\subsection{Proximal Operators and Splitting Schemes}

The minimization of a convex functional $F$ over some Hilbert space $\Hh$ requires the use of algorithms that are tailored to the specific properties of the functional. Smooth functions can be minimized through the use of gradient descent steps, which amounts to apply repeatedly the mapping $\Id_{\Hh} - \ga \nabla F$ for a small enough step size $\ga>0$. Such schemes can not be used for non-smooth functions such as the one considered in~\eqref{eq-optim-bb-disc}. 

A popular class of first order methods, so-called proximal splitting schemes, replaces the gradient descent step by the application of the proximal operator. The proximal operator $\prox_{\ga F} : \Hh \rightarrow \Hh$ of a convex, lower semi-continuous and proper functional $F : \Hh \rightarrow \RR \cup \{+\infty\}$ is defined as
\eql{\label{eq:def_prox} 
	\prox_{\ga F}(z) = \uargmin{\tilde z \in \Hh} \frac{1}{2} \norm{z-\tilde z}^2 + \ga F(\tilde z). 
}
This proximal operator is a single-valued map, which is related to the set-valued map of the sub-differential $\partial F$ by the relationship $\prox_{\ga F} = ( \Id_{\Hh} + \ga \partial F )^{-1}$. This is why the application of $\prox_{\ga F}$ is often referred to as an implicit descent step, which should be compared with the explicit gradient descent step, $\Id_{\Hh} - \ga \nabla F$. 

Proximal operators enjoy several interesting algebraic properties which help the practitioner to compute it for complicated functionals. A typical example is the computation of the proximal operator of the Legendre-Fenchel transform of a function. The Legendre-Fenchel transform of $F$ is defined as
\eql{\label{eq-legendre-transform}
	F^*(w) = \umax{z \in \Hh} \dotp{z}{w}-F(z).
}
Note that thanks to Moreau's identity~\cite{Moreau1965}
\eql{\label{eq-moreau-identity}
	\foralls w \in \Hh, \quad \prox_{\ga F^\star}(w) = w - \ga \prox_{F/\ga}(w/\ga),
}
computing the proximal operator of $F^*$ has the same complexity as computing the proximal operator of $F$. 

To enable the use of these proximal operators within an optimization scheme, it is necessary to be able to compute them efficiently. We call a function $F$ such that $\prox_{\ga F}$ can be computed in closed form a \textit{simple function}. It is often the case that the function to be minimized is not simple. The main idea underlying proximal splitting methods is to decompose this function into a sum of simple functions (possibly composed by linear operators). We detail bellow three popular splitting schemes, the Douglas-Rachford (DR) algorithm, the Alternating Direction Method of Multipliers (ADMM) and a primal-dual algorithm.  We refer the reader to~\cite{combettes-pesquet-review} for a detailed review of proximal operators and proximal splitting schemes.

\subsection{Computing $\prox_{\ga \Jfunc}$}

The following proposition shows that the functional $\Jfunc$ defined in~\eqref{eq-jfunc-totale} is simple, in the sense that its proximal operator can be computed in closed form. 

\begin{prop}\label{proposition1}
	One has
	\eq{
		\foralls V \in \Ec, 
		\quad \prox_{\ga \Jfunc}(V) = ( \prox_{\ga \jfunc}(V_k) )_{k \in \Gc}
	}
	where, for all $(\tilde m,\tilde f) \in \RR^d \times \RR$, 
	\eq{
		\prox_{\ga \jfunc}(\tilde m,\tilde f) = 
		\left\{\begin{array}{cl}
	  		(\mu(f^{\star}),f^\star) &\text{if } f^\star > 0, \\
		  	(0,0) &\text{otherwise}.\end{array}\right.
	}
	\eql{\label{eq:mstar}
		\qwhereq
		\foralls f \geq 0, \quad \mu(f) = \frac{ f \tilde m}{ f + \ga }
	}
	and $f^\star$ is the largest real root of the third order polynomial equation in $X$
	\eql{\label{eq:polynome3}
		P(X)=(X-\tilde f)(X+\ga) ^2-\frac{\ga}2\norm{\tilde m}^2 = 0	.
	}
\end{prop}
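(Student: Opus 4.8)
The plan is to prove the separability first and then solve the single-site problem. Since $\Jfunc(V)=\sum_{k\in\Gc}\jfunc(m_k,f_k)$ is a sum over grid points of identical copies of $\jfunc$ acting on \emph{disjoint} coordinate blocks $(m_k,f_k)$, the proximal operator separates: the minimization in~\eqref{eq:def_prox} decouples across $k$, so $\prox_{\ga\Jfunc}(V)=(\prox_{\ga\jfunc}(V_k))_k$. This reduces everything to computing $\prox_{\ga\jfunc}(\tilde m,\tilde f)$ for a single pair $(\tilde m,\tilde f)\in\RR^d\times\RR$, which by definition is the minimizer of
\eq{
	\Phi(m,f) = \tfrac12\norm{m-\tilde m}^2 + \tfrac12(f-\tilde f)^2 + \ga\,\jfunc(m,f).
}

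Next I would analyze the minimization of $\Phi$. Because $\jfunc(m,f)=+\infty$ outside the closed convex region $\{f>0\}\cup\{(0,0)\}$, any finite candidate minimizer lies in $f\ge 0$; I treat the two pieces of the domain. On the open region $f>0$ the function $\jfunc(m,f)=\norm m^2/(2f)$ is smooth, so I would set the partial gradients to zero. The stationarity condition in $m$ gives $m-\tilde m+\ga\, m/f=0$, i.e. $m=\mu(f)=f\tilde m/(f+\ga)$, which is exactly~\eqref{eq:mstar}; this expresses the optimal momentum as a function of the optimal density. Substituting $m=\mu(f)$ back and imposing the stationarity condition in $f$, namely $f-\tilde f-\ga\norm m^2/(2f^2)=0$, I would obtain after clearing denominators the cubic $(f-\tilde f)(f+\ga)^2-\tfrac{\ga}{2}\norm{\tilde m}^2=0$, which is precisely $P(X)=0$ in~\eqref{eq:polynome3}. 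Thus every interior critical point corresponds to a positive root of $P$.

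Then I would argue that the correct minimizer is obtained by taking $f^\star$ to be the \emph{largest} real root of $P$ when it is positive, and otherwise falling back to $(0,0)$. The key facts I would establish are: $P$ is a cubic with positive leading coefficient, so $P(X)\to+\infty$ as $X\to+\infty$ and $P$ has at least one real root; evaluating $P(0)=-\tilde f\,\ga^2-\tfrac{\ga}{2}\norm{\tilde m}^2$ and checking the sign of $P$ at $\tilde f$ helps locate roots. To select among the roots I would use convexity of $\jfunc$ (hence of $\Phi$), which guarantees the prox is single-valued, together with a monotonicity/growth argument showing that the restriction of $\Phi$ along the curve $f\mapsto\Phi(\mu(f),f)$ is minimized at the largest positive root (smaller positive roots are local maxima or saddles of the reduced one-dimensional problem). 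If $P$ has no positive root, the infimum over $f>0$ is not attained and one compares against the boundary value $\jfunc(0,0)=0$, giving $\prox_{\ga\jfunc}(\tilde m,\tilde f)=(0,0)$.

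The main obstacle I anticipate is the root-selection step: justifying rigorously that the largest positive root of $P$, rather than some smaller root, yields the global minimizer of $\Phi$, and handling the degenerate/boundary case cleanly. This requires care because the reduced function $g(f)=\Phi(\mu(f),f)$ on $(0,\infty)$ need not be convex even though $\Phi$ is jointly convex; the clean way is to exploit joint convexity of $\Phi$ to assert uniqueness of the global minimizer and then verify that the largest root is the unique candidate compatible with a nonnegative second-order/boundary condition, while all spurious roots introduced by clearing denominators are discarded. Once this is settled, the explicit formula for $\mu(f^\star)$ from~\eqref{eq:mstar} and the case split complete the proof.
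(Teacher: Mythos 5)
You follow the same route as the paper's proof: separability of $\prox_{\ga \Jfunc}$ across the disjoint blocks $(m_k,f_k)$, first-order stationarity of $\Phi(m,f)=\frac{1}{2}\norm{(m,f)-(\tilde m,\tilde f)}^2+\ga\,\jfunc(m,f)$ on $\{f>0\}$, elimination $m=\mu(f)$, and the cubic $P(f)=0$. The genuine gap is the root-selection step, which you yourself flag as unresolved --- and the resolution you sketch (second-order conditions along the reduced curve $g(f)=\Phi(\mu(f),f)$, classifying ``smaller positive roots'' as local maxima or saddles) is both unnecessary and unsubstantiable, because \emph{no smaller positive roots exist}. For $f>0$ the stationarity system is exactly equivalent to $m=\mu(f)$ and $P(f)=0$: clearing denominators only multiplies by $(f+\ga)^2>0$, so, contrary to your worry, no spurious positive roots are introduced. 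Since $\Phi$ is $C^1$ and strictly (indeed strongly) convex on $\RR^d\times\RR^{+,*}$, it has at most one critical point there, hence $P$ has at most one strictly positive root. This one-line convexity observation --- the one the paper uses --- dissolves the entire ``obstacle'' paragraph of your proposal; ``largest real root'' in the statement is merely shorthand for ``the positive root, if any'' (a real cubic always has a real root, and its largest root is positive precisely when a positive root exists).

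The remaining cases then close as you indicate, but can be said more precisely. If $P$ has a positive root $f^\star$, then $(\mu(f^\star),f^\star)$ is an interior critical point of the convex function $\Phi$, hence its global minimizer; no comparison with the boundary value $\Phi(0,0)$ is needed, contrary to what your fallback suggests. If the largest real root is $\leq 0$, there is no critical point with $f>0$, so the minimizer of $\Phi$ (which exists because $\Phi$ is coercive and $\jfunc$ is lower semi-continuous) must have $f=0$, and finiteness of $\jfunc$ then forces $m=0$, giving $\prox_{\ga\jfunc}(\tilde m,\tilde f)=(0,0)$. Two minor slips to correct: the effective domain $\{f>0\}\cup\{(0,0)\}$ is convex but \emph{not} closed (the points $(m,0)$, $m\neq 0$, lie in its closure), so you should invoke lower semi-continuity rather than closedness; and uniqueness of the prox follows from strong convexity of $\Phi$, not from convexity of $\jfunc$ alone.
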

\begin{proof}
	We denote $(m,f) = \prox_{\ga \jfunc}(\tilde m,\tilde f)$. 
	If $f > 0$, since $\jfunc$ is $C^1$ and is strongly convex on $\RR^d \times \RR^{+,*}$, necessarily $(m,f)$ is the unique solution of $\nabla \jfunc(m,f)=0$, which reads
	\eq{
		\choice{
			\ga \frac{m}{f} + m-\tilde m=0, \\
			 -\ga \frac{\norm{ m}^2}{f^{2}} + f-\tilde f = 0.
		}
	}
	Reformulating these equations leads to the following equivalent conditions
	\eq{
		P(f) = 0
		\qandq
		m = \mu(f).
	}
	This shows that if $P$ as at least a strictly positive real root $f^\star$, it is necessarily unique and that $(f=f^\star,m=\mu(f^\star))$.
	On the contrary, one necessarily has $f=0$ and, by definition of $J$, $m=0$ as well.
\end{proof}

\subsection{Computing $\proj_{\Cinc}$}

The proximal mapping of $\iota_\Cinc$ is $\proj_{\Cinc}$ the orthogonal projector on the convex set $\Cinc$. This is an affine set that can be written as 
\eq{
	\Cc = \enscond{ U =(m,f) \in \Es }{ A U = y }
	\qwhereq
	A U = (\diverg(U), b(U))
	\qandq
	y = (0,b_0).
}
This projection can be computed by solving a linear system as
\eq{
	\proj_{\Cinc} = (\Id - A^* \De^{-1} A) + A^* \De^{-1} y
}
where applying $\De^{-1} = (AA^*)^{-1}$ requires solving a Poisson equation on the centered grid $\Gc$ with the prescribed boundary conditions. This can be achieved with Fast Fourier Transform in $O(NP \log(NP))$ operations where $N$ and $P$ are number of spatial and temporal points, see~\cite{Strang1988}.

\subsection{Douglas-Rachford Solver}
\label{DR-algo}

\paragraph{DR algorithm}

The Douglas-Rachford (DR) algorithm~\cite{Lions-Mercier-DR} is a proximal splitting method that allows one to solve
\eql{\label{eq-dr-problem}
	\umin{ z \in \Hh } G_1(z) + G_2(z)
}
where $G_1$ and $G_2$ are two simple functions defined on some Hilbert space $\Hh$.

The iterations of the DR algorithm define a sequence $(\iter{z}, \iter{w}) \in \Hh^2$ using a initial $(z^{(0)}, w^{(0)})\in \Hh^2$ and 
\begin{equation}\label{eq-dr-iters}
\begin{aligned}
	\iiter{w} &=\iter{w} + \al (\prox_{\ga G_1} (2\iter{z}-\iter{w})-\iter{z}),\\
\iiter{z} &= \prox_{\ga G_2}(\iiter{w}).
\end{aligned}
\end{equation}
If $0 < \al < 2$ and $\ga > 0$, one can show that $\iter{z} \rightarrow z^\star$ a solution of~\eqref{eq-dr-problem}, see~\cite{Combettes2007} for more details. 

In the following, we describe several possible ways to map the optimal transport optimization~\eqref{eq-optim-bb-disc} into the splitting formulation~\eqref{eq-dr-problem}, which in turn gives rise to four different algorithms.

\paragraph{Asymmetric-DR (A-DR) splitting scheme}

We recast the initial optimal transport problem~\eqref{eq-optim-bb-disc} as an optimization of the form~\eqref{eq-dr-problem} by using the variables 
\eq{
	z=(U,V) \in \Hh = \Es \times \Ec
}
and setting the functionals minimized as
\eq{
	\foralls z=(U,V) \in \Es \times \Ec, \quad
	G_1(z) = \Jfunc(V) + \iota_{\Cinc}(U)
	\qandq
	G_2(z) = \iota_{\Cc_{s,c}}(z).
}
In this expression, $\Cc_{s,c}$ is the constraint set that couples staggered and centered variables
\eq{
	\Cc_{s,c} = \enscond{z=(U,V) \in \Es \times \Ec}{ V=\interp(U) }
}
and $\interp$ is the interpolation operator defined in~\eqref{eq-defn-interp}.

Both $G_1$ and $G_2$ are simple functions since
\begin{align}\label{eq-def-dr-functionals}
	\prox_{\ga G_1}(U,V) &= ( \proj_{\Cinc}(U), \prox_{\ga \Jfunc}(V) ), \\
	\prox_{\ga G_2}(U,V) &= \proj_{\Cc_{s,c}}(U,V) = ( \tilde U, \Ii(\tilde U) )
	\qwhereq
	\tilde U = (\Id + \Ii^* \Ii)^{-1}( U + \Ii^*(V) )
\end{align}
where $\interp^*$ is the adjoint of the linear interpolation operator. Note that computing $\proj_{\Cc_{s,c}}$ requires solving a linear system, but this system is separable along each dimension of the discretization grid, so it only requires solving a series of small linear systems. Furthermore, since the corresponding inverse matrix is the same along each dimension, we pre-compute explicitly the inverse of these $d+1$ matrices.

In our case, the iterates variables appearing in~\eqref{eq-dr-iters} read $\iter{z} = (\iter{U}, \iter{V})$, which allows one to retrieve an approximation $\iter{f}$ of the transport geodesic as $\iter{U} = (\iter{m},\iter{f})$.

A nice feature of this scheme A-DR is that the iterates always satisfy $\iter{U} \in \Cc$, but in general one does not have $\iter{V} = \interp(\iter{U})$.

\paragraph{Asymmetric-DR' (A-DR') splitting scheme}

In the DR algorithm~\eqref{eq-dr-iters}, the functionals $G_1$ and $G_2$ do not play a symmetric role. One thus obtains a different algorithm (that we denote as A-DR'), by simply exchanging the definitions of $G_1$ and $G_2$ in~\eqref{eq-def-dr-functionals}. Using this scheme A-DR', one has $\iter{V} = \interp{\iter{U}}$, but in general $\iter{U}$ is not in the constraint set $\Cc$.

\paragraph{Symmetric-DR (S-DR) splitting scheme}

In order to restore symmetry between the functionals $\Jfunc$ and $\iota_\Cc$ involved in the minimization algorithm, one can consider the following formulation
\eq{
	z = (U,V,\tilde U, \tilde V) \in \Hh = 	(\Es \times \Ec)^2
}
using the following functionals
\eq{
	G_1(z) = \Jfunc(V) + \iota_{\Cinc}(U) +	\iota_{\Cc_{s,c}}(\tilde U, \tilde V)
	\qandq
	G_2(z) = \iota_{\Dd}(z), 
}
where $\Dd$ is the diagonal constraint
\eql{\label{eq-split-spingarn}
	\Dd = \enscond{ z = (U,V,\tilde U, \tilde V) \in \Hh }{  
		U=\tilde U \qandq V=\tilde V
	}.	
} 
These functionals are simple since, for $z = (U,V,\tilde U, \tilde V) \in \Hh$, one has
\begin{align*}
	\prox_{\ga G_1}(z) &= 
	\pa{
		\proj_{\Cinc}(U), \prox_{\ga \Jfunc}(V),
		\proj_{\Cc_{s,c}}(\tilde U,\tilde V)
	}\\
	\qandq
	\prox_{\ga G_2}(z) &= \frac{1}{2} \pa{ 
		U+\tilde U, V+\tilde V, U+\tilde U, V+\tilde V
	}.
\end{align*}

The splitting reformulation of the form~\eqref{eq-split-spingarn} was introduced in~\cite{Spingarn85}, and the corresponding DR scheme, extended to a sum of an arbitrary number of functionals, is detailed in~\cite{Pustelnik-PPXA,CombettesPesquet-PPXA}.

\paragraph{Symmetric-DR' (S-DR') splitting scheme}

Similarely to the relationship between A-DR and A-DR' algorithm, it is possible to define a S-DR' algorithm by reversing the roles of $G_1$ and $G_2$ in the algorithm S-DR.

\subsection{Primal-Dual Solver}
\label{PD-algo}

Primal dual (PD) algorithms such as the relaxed Arrow-Hurwitz method introduced in~\cite{Chambolle2011} allows one to minimize functionals of the form $G_1+G_2 \circ A$ where $A$ is a linear operator and $G_1, G_2$ are simple functions. One can thus directly apply this method to problem~\eqref{eq-optim-bb-disc} with $G_2=\Jfunc$, $A=\interp$ 	and $G_1 = \iota_{\Cinc}$.

The iterations compute a sequence $(\iter{U},\iter{\Upsilon}, \iter{V}) \in \Es \times \Es \times \Ec$ of variables from an initial $(\Upsilon^{(0)}, V^{(0)})$ according to
\begin{equation}
\begin{aligned}
	\iiter{V} &= \prox_{\sigma G_2^*}( \iter{V} + \sigma \interp    \iter{\Upsilon} ), \\
	\iiter{U} &= \prox_{\tau G_1}(  \iter{U} - \tau \interp^*\iiter{V}   ), \\
	\iiter{\Upsilon} &= \iiter{U} + \theta (  \iiter{U} - \iter{U} ).
\end{aligned}
\end{equation}
Note that $\prox_{\sigma G_2^*}$ can be computed using $\prox_{G_2}$ following equation~\eqref{eq-moreau-identity}. If $0 \leq \theta \leq 1$ and  $\sigma \tau \norm{\interp}^2<1$ then one can prove that $\iter{U} \rightarrow U^\star$ which is a solution of~\eqref{eq-optim-bb-disc}, see~\cite{Chambolle2011}.

The case $\th=0$ corresponds to the Arrow-Hurwicz algorithm~\cite{Arrow1958}, and the general case can be interpreted as a preconditioned version of the ADDM algorithm, as detailed in~\cite{Chambolle2011}.

\subsection{ADMM Solver on Centered Grid}
\label{subsec-addm-centered-grid}

In this Section, we give some details about the relationship between the algorithm ALG2 developed by Benamou and Brenier in~\cite{Benamou2000} and our DR algorithms. The original paper~\cite{Benamou2000} considers a finite difference implementation on a centered grid, which leads to solve the following optimization problem
\eql{\label{eq-min-centered-grid}
	\umin{ V \in  \Ec } \Jfunc(V) + \iota_{\tilde\Cinc}(V)
	\qwhereq
	\tilde\Cc = \enscond{ V \in \Es }{ A V = y }
} 
where $A$ is this time defined on a centered grid. 

\paragraph{Primal problem}

The minimization~\eqref{eq-min-centered-grid} has the form
\eql{\label{eq-addm-dr-primal}
	\umin{z \in \Hh} F \circ A(z) + G(z), 
}
where $A : \Hh \rightarrow \tilde\Hh$ is a linear operator, and $F \circ A$ and $G$ are supposed to be simple functions. 

For the OT problem on a centered grid~\eqref{eq-min-centered-grid}, the identification is obtained by setting 
\eq{
	F = \iota_{\{y\}}
	\qandq 
	G = \Jfunc,
} 
which are indeed simple functions. One can use the DR algorithm~\eqref{eq-dr-iters} with 
\eql{\label{eq-bb-splitting-dr}
	G_1=F \circ A
	\qandq 
	G_2=G
} 
to solve problems of the form~\eqref{eq-addm-dr-primal}. Of course, the variants A-DR', S-DR and S-DR' considered in Section~\ref{DR-algo} could be used as well on a centered grid.  Since we focus on the relationship with the work~\cite{Benamou2000}, we only consider the splitting \eqref{eq-bb-splitting-dr}.

\paragraph{Dual problem}

Following~\cite{Benamou2000}, one can consider the Fenchel-Rockafeller dual to the primal program~\eqref{eq-addm-dr-primal}, which reads
\eql{\label{eq-fenchel-dual-pbm}
	\umax{s \in \tilde\Hh}  - \pa{ G^* \circ (-A^*)(s) + F^*(s) }.
}
In the specific setting of the OT problem~\eqref{eq-min-centered-grid}, $F^*(p) = \dotp{y}{p}$ and the following proposition, proved in~\cite{Benamou2000}, shows that $G^* = \Jfunc^*$ is a projector on a convex set (which is a consequence of the fact that $\Jfunc$ is a 1-homogenous functional). 

\begin{prop}
	One has
	\eq{
		\Jfunc^* = \iota_{ \Cc_{\Jfunc} }
		\qwhereq
		\choice{
			\Cc_{\Jfunc} = \enscond{ V \in \Ec }{ \foralls k \in \Gc, \; V_k \in \Cc_\jfunc }, \\	
			\Cc_\jfunc = \enscond{ (a,b) \in \RR^d \times \RR }{ \norm{a}^2 + 2b \leq 0 }.
		}
	}
\end{prop}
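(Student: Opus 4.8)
The plan is to compute the Legendre--Fenchel transform $\Jfunc^*$ directly from its definition and show that it is everywhere either $0$ or $+\infty$, with the finite value occurring exactly on the stated cone $\Cc_{\Jfunc}$. Since $\Jfunc(V) = \sum_{k \in \Gc} \jfunc(m_k,f_k)$ is separable over the grid points, and the pairing $\dotp{V}{W}$ splits correspondingly, the conjugate also separates: it suffices to establish that $\jfunc^\star = \iota_{\Cc_\jfunc}$ pointwise, where $\Cc_\jfunc = \{(a,b) \in \RR^d \times \RR : \norm{a}^2 + 2b \leq 0\}$, and then reassemble. So the whole argument reduces to a single convex-duality computation for the scalar cell functional $\jfunc$ defined in~\eqref{eq-j-func}.

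First I would write, for a fixed dual variable $(a,b) \in \RR^d \times \RR$,
\eq{
	\jfunc^\star(a,b) = \sup_{(m,f)} \pa{ \dotp{m}{a} + f b - \jfunc(m,f) }.
}
The key structural fact, already flagged in the excerpt, is that $\jfunc$ is $1$-homogeneous, i.e. $\jfunc(\lambda m, \lambda f) = \lambda\, \jfunc(m,f)$ for $\lambda \geq 0$, because $\frac{\norm{\lambda m}^2}{2 \lambda f} = \lambda \frac{\norm{m}^2}{2f}$. A standard lemma states that the conjugate of a proper, lower semicontinuous, positively $1$-homogeneous convex function is the indicator of a closed convex set, namely the set of slopes $(a,b)$ for which $\dotp{m}{a} + fb \leq \jfunc(m,f)$ holds for all $(m,f)$ in the effective domain; this is exactly the dichotomy $\jfunc^\star \in \{0,+\infty\}$. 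So the task becomes: identify that set of admissible slopes.

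Next I would evaluate the supremum explicitly on the open half $f > 0$, where $\jfunc$ is smooth. Fixing $f > 0$ and optimizing over $m$ gives the inner maximizer $m = f a$ with inner value $f \pa{ b + \tfrac12 \norm{a}^2 }$, a linear function of $f$. If $b + \tfrac12 \norm{a}^2 > 0$ the supremum over $f \to +\infty$ is $+\infty$; if $b + \tfrac12 \norm{a}^2 \leq 0$ the supremum over $f \geq 0$ is attained at $f=0$ with value $0$, and the degenerate point $(m,f)=(0,0)$ contributes only $0$ as well. Hence $\jfunc^\star(a,b) = 0$ precisely when $\norm{a}^2 + 2b \leq 0$ and $+\infty$ otherwise, which is $\iota_{\Cc_\jfunc}(a,b)$. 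Summing over $k \in \Gc$ and using separability yields $\Jfunc^\star = \iota_{\Cc_{\Jfunc}}$.

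The main obstacle is purely a matter of rigor rather than difficulty: one must handle the boundary case $f = 0$ carefully, since $\jfunc$ equals $+\infty$ there except at the single point $(0,0)$, and confirm that admitting that isolated finite value does not enlarge the conjugate beyond what the $f>0$ analysis gives. This amounts to checking that the closed cone of admissible slopes is already determined by the interior of the domain, which follows from lower semicontinuity of $\jfunc$ (so that its conjugate equals the conjugate of its closure). Once this is verified, reassembling the separable sum is immediate.
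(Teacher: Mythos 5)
Your proposal is correct and follows essentially the same route as the paper's proof: reduce by separability to the per-cell conjugate $\jfunc^*$, maximize over $m$ at fixed $f>0$ (yielding $m=fa$ and the linear-in-$f$ value $f(\norm{a}^2/2+b)$), and conclude $0$ versus $+\infty$ according to the sign of $\norm{a}^2+2b$. If anything, your treatment is slightly more careful than the paper's, which handles the $f=0$ and $f<0$ cases only with a brief remark and invokes $1$-homogeneity parenthetically, whereas you make both the boundary check and the homogeneity lemma explicit.
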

\begin{proof}
	The Lengendre-Fenchel transform~\eqref{eq-legendre-transform} of  the functional $\jfunc$ defined in~\eqref{eq-j-func} at point $(a,b)\in \RR^d \times \RR$ reads
\eq{
		\jfunc^*(a,b) =\umax{(m,f) \in \RR^d \times \RR} \dotp{a}{m}+\dotp{b}{f}-\frac{m^2}{2f},
	}
where we just  focus on the case $f>0$, since $f=0$ (resp. $f<0$) will always give $\jfunc^*=0$ (resp. $\jfunc^*=-\infty$).
The optimality conditions  are given by
	\eq{
		a =\frac{m}f \qandq b=-\frac{m^2}{2f^2}.
	}
Hence, we have that
\begin{align*}
	\jfunc^*(a,b) &=\umax{(m,f) \in \Ec} f(\dotp{a}{\frac{m}f}+b) -\frac{m^2}{2f^2}\\
 &= \umax{(m,f) \in \Ec} f \left(\dotp{a}{a}+2b\right),
\end{align*}
which is $0$ if  $\norm{a}^2 + 2b \leq 0$ and $+\infty$ otherwise.
\end{proof}

Note that one can use the Moreau's identity~\eqref{eq-moreau-identity} to compute the proximal operator of $\jfunc$ using the orthogonal projection on $\Cc_\jfunc$ and vice-versa
\eq{
	\prox_{\ga \jfunc}(v) = v - \ga \proj_{\Cc_{\jfunc}} (v/\ga).
}

\paragraph{ADMM method}

The Alternating Direction Method of Multipliers (ADMM) is an algorithm to solve a minimization of the form
\eql{\label{eq-addm-dr-dual}
	\umin{s \in \tilde\Hh}  H \circ B(s) + K(s) ,    
}
where we assume that the function $K^* \circ B^*$ is simple and that $B : \tilde\Hh \rightarrow \Hh$ is injective. It was initially proposed in~\cite{GabayMercier,GlowinskiMarroco}.


We introduce the Lagrangian associated to~\eqref{eq-addm-dr-dual} to account for an auxiliary variable $q \in \Hh$ satisfying $q=Bs$ with a multiplier variable $v \in \Hh$
\eq{
	\foralls (s,q,v) \in \tilde \Hh \times \Hh \times \Hh, \quad
	L(s,q,v) = K(s) + H(q) + \dotp{v}{ Bs-q}, 
}
and the augmented Lagrangian for $\ga > 0$
\eq{
	\foralls (s,q,v) \in \tilde \Hh \times \Hh \times \Hh, \quad
	L_\ga(s,q,v) = L(s,q,v)  + \frac{\ga}{2}\norm{ Bs-q}^2
}
The ADMM algorithm generates iterates $(\iter{s}, \iter{q}, \iter{v})  \in \tilde \Hh \times \Hh \times \Hh$ as follow 
\begin{equation}\label{eq-admm-iter}
\begin{aligned}
	\iiter{s} &= \uargmin{s} L_\ga(s,\iter{q},\iter{v}), \\
	\iiter{q} &= \uargmin{q} L_\ga(\iiter{s},q,\iter{v}), \\
	\iiter{v} &= \iter{v} + \ga (B \iiter{s}-\iiter{q}).
\end{aligned}
\end{equation}
This scheme can be shown to converge for any $\ga > 0$, see~\cite{GabayMercier,GlowinskiMarroco}. A recent review of the ADMM algorithm and its applications in machine learning can be found in~\cite{BoydADMM}.

We introduce the proximal operator $\prox_{\ga F}^B : \Hh \rightarrow \tilde \Hh$ with a metric induced by an injective linear map $B$ 
\eql{\label{eq-def-proxB}
	\foralls z \in \Hh, \quad
	\prox_{K/\ga}^B(z) = \uargmin{s \in \tilde\Hh} \frac{1}{2} \norm{B s - z}^2 + \frac{1}{\ga} K(s).
}
One can then re-write the ADMM iterations~\eqref{eq-admm-iter} using proximal maps
\begin{equation}\label{eq-admm-prox-iters}
\begin{aligned}
	\iiter{s} &= \prox_{K/\ga }^B ( \iter{q} - \iter{u}), \\
	\iiter{q} &= \prox_{H/\ga } ( B\iiter{s} + \iter{u} ), \\
	\iiter{u} &= \iter{u} + B \iiter{s}-\iiter{q} .
\end{aligned}
\end{equation}
where we have performed the change of variable $\iter{u} = \iter{v}/\ga$ to simplify the notations. 


The following proposition shows that if $K^* \circ B^*$ is simple, one can indeed perform the ADMM algorithm. Note that in the case $\Hh=\tilde\Hh$ and  $B=\Id_{\Hh}$, one recovers Moreau's identity~\eqref{eq-moreau-identity}.

\begin{prop}\label{eq-proxB-proxD}
One has
\eq{
	\foralls z \in \Hh, \quad
		\prox_{K/\ga}^B(z) = B^+ \pa{ z - \frac{1}{\ga} \prox_{\ga K^* \circ B^* }(\ga z) }.
}
\end{prop}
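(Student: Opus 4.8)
The plan is to establish the identity by directly manipulating the definition~\eqref{eq-def-proxB} of $\prox_{K/\ga}^B$ and relating the resulting optimization to the proximal operator of the composed function $K^* \circ B^*$. First I would write out the optimality condition for the minimization defining $\prox_{K/\ga}^B(z)$. Since $s \mapsto \frac{1}{2}\norm{Bs-z}^2 + \frac{1}{\ga}K(s)$ is minimized at $\hat s = \prox_{K/\ga}^B(z)$, the first-order condition reads $B^*(B\hat s - z) + \frac{1}{\ga}\partial K(\hat s) \ni 0$, i.e. $\ga B^*(z - B\hat s) \in \partial K(\hat s)$.

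The key step is to introduce the dual variable $w = \ga(z - B\hat s)$ and recognize that the inclusion $B^* w \in \partial K(\hat s)$ is, by the standard duality between subgradients of $K$ and $K^*$, equivalent to $\hat s \in \partial K^*(B^* w)$ at the optimal point. Combining this with the definition $w = \ga(z - B\hat s)$, one can eliminate $\hat s = \frac{1}{B}(z - w/\ga)$ — more precisely $B\hat s = z - w/\ga$ — and show that $w$ itself is characterized as the minimizer of the proximal problem defining $\prox_{\ga K^* \circ B^*}(\ga z)$. Indeed, the optimality condition for $\uargmin_w \frac{1}{2}\norm{w - \ga z}^2 + \ga (K^* \circ B^*)(w)$ is $w - \ga z + \ga B \partial K^*(B^* w) \ni 0$, which rearranges exactly to $B\hat s = z - w/\ga$ together with $\hat s \in \partial K^*(B^*w)$. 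Hence $w = \prox_{\ga K^* \circ B^*}(\ga z)$, so that $B\hat s = z - \frac{1}{\ga}\prox_{\ga K^* \circ B^*}(\ga z)$.

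Finally, applying the Moore–Penrose pseudo-inverse $B^+$ to both sides and using the injectivity of $B$ (which guarantees $B^+ B = \Id$ on the relevant subspace, so $\hat s$ is recovered exactly) yields
\[
	\prox_{K/\ga}^B(z) = \hat s = B^+\pa{ z - \frac{1}{\ga}\prox_{\ga K^* \circ B^*}(\ga z) },
\]
which is the claimed formula.

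The main obstacle I anticipate is the careful handling of the subgradient chain rule for $K^* \circ B^*$ and the role of injectivity of $B$. The identity $\partial(K^* \circ B^*)(w) = B\,\partial K^*(B^* w)$ requires a qualification condition, and one must verify that the composition with $B$ on the left of the inclusion is correctly inverted by $B^+$; since $w = \ga(z - B\hat s)$ lives in a range where $B\hat s$ is well-defined, the pseudo-inverse reconstructs $\hat s$ precisely because $B$ is injective. I would take care that the passage between the primal characterization in terms of $\hat s$ and the dual characterization in terms of $w$ is a genuine equivalence of optimality systems and not merely a one-directional implication, so that uniqueness of both minimizers (guaranteed by strong convexity of the quadratic terms) closes the argument.
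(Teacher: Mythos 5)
Your proof is correct and follows essentially the same route as the paper's: write the first-order condition for the minimization \eqref{eq-def-proxB}, invert $\partial K$ into $\partial K^*$, recognize the resulting inclusion as the optimality system for $\prox_{\ga K^* \circ B^*}$ at $\ga z$, and recover the minimizer via $B^+$ using injectivity of $B$ --- the paper merely phrases the identical computation in resolvent language, $\prox_{\ga K^* \circ B^*} = (\Id + \ga \Vv)^{-1}$ with $\Vv = B \circ \partial K^* \circ B^*$. One remark: your worry about the chain-rule qualification is moot, since you only need the always-valid inclusion $B\,\partial K^*(B^* w) \subseteq \partial (K^* \circ B^*)(w)$; this one-directional implication, combined with uniqueness of the strongly convex prox problem, already identifies $w$, exactly as your final paragraph anticipates.
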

\begin{proof}
	We denote $\Uu = \partial K$ the set-valued maximal monotone operator. One has
	$\partial K^* = \Uu^{-1}$, where $\Uu^{-1}$ is the set-valued inverse operator, and $\partial (K^* \circ B^*) = B \circ\Vv \circ B^*$. 
	One has $\prox_{\ga K^* \circ B^* } = (\Id + \ga \Vv)^{-1}$, which a single-valued operator. 
	Denoting  $s = \prox_{K/\ga}^B(z)$, the optimality conditions of~\eqref{eq-def-proxB} lead to
	\begin{align}
		& 
		0 \in B^*( B s - z) + \frac{1}{\ga} \Uu(s) 
		\Longleftrightarrow 
		s \in \Uu^{-1} \pa{ \ga B^* B s - \ga B^* z   }
		\Longleftrightarrow 
		\ga B s \in \ga \Vv  \pa{ \ga z - \ga B s    } \\
		& 
		\Longleftrightarrow
		\ga B s \in ( \Id + \ga \Vv )\pa{ \ga z - \ga B s   } + \ga B s - \ga z 
		\Longleftrightarrow
		\ga z \in ( \Id + \ga \Vv )\pa{ \ga z - \ga B s   } \\
		&
		\Longleftrightarrow
		\ga z - \ga B s = ( \Id + \ga \Vv )^{-1}(\ga z) 
		\Longleftrightarrow s = B^{+}\pa{ z - \frac{1}{\ga} ( \Id + \ga \Vv )^{-1}(\ga z) }		
	\end{align}
	where we used the fact that $B$ is injective. 
\end{proof}

\paragraph{Equivalence between ADMM and DR}

The ALG2 algorithm of~\cite{Benamou2000} corresponds to applying the ADMM algorithm to the dual problem~\eqref{eq-fenchel-dual-pbm} that can be formulated as
\eql{\label{eq-fenchel-dual-pbm2}
	\umin{s \in \tilde\Hh}   \pa{ G^* \circ (-A^*)(s) + F^*(s) },
} 
while retrieving at each iteration the primal iterates in order to solve the primal problem~\eqref{eq-addm-dr-primal}. The following proposition, which was initially proved in~\cite{Gabay83}, shows that applying ADMM algorithm to the dual~\eqref{eq-fenchel-dual-pbm} is equivalent to solving the primal~\eqref{eq-addm-dr-primal} using DR. More precisely, the dual variable (the Lagrange multiplier) $\iter{v}$ of the ADMM iterations is equal to the primal variable $\iter{z}$ of the DR iterations. This result was further extended by~\cite{Eckstein1992} which shows the equivalence of ADMM with the proximal point algorithm and propose several generalizations. For the sake of completeness, we detail the proof of this result using our own notations. 

\begin{prop}
	Setting
	\eql{\label{eq-chg-var-dr-addmm}
		H = G^*, \quad
		K = F^*, \qandq
		B = -A^*, 
	}
	the DR iterations~\eqref{eq-dr-iters} with functionals~\eqref{eq-bb-splitting-dr} and $\al=1$  are recovered from the ADMM iterations~\eqref{eq-admm-iter} using
	\begin{align*}
		   \iter{v}  &= \iter{z},\\
		\ga  \iter{q} &= \iter{w}-\iter{z}, \\
		\ga  A^* \iiter{s} &= \iter{z}-\iiter{w}.
	\end{align*}
\end{prop}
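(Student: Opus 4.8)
The plan is to establish the claimed correspondence by writing out both the ADMM iterations~\eqref{eq-admm-iter} applied to the dual problem~\eqref{eq-fenchel-dual-pbm2} and the DR iterations~\eqref{eq-dr-iters} applied to the primal splitting~\eqref{eq-bb-splitting-dr}, and then verifying that the change of variables~\eqref{eq-chg-var-dr-addmm} transforms one sequence into the other step by step. Under the substitution $H=G^*$, $K=F^*$, $B=-A^*$, the dual~\eqref{eq-fenchel-dual-pbm2} takes exactly the form $H\circ B(s)+K(s)$ required by ADMM in~\eqref{eq-addm-dr-dual}, so the three-step recursion~\eqref{eq-admm-iter} is well defined. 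First I would expand the $s$-update and the $q$-update of ADMM in terms of the augmented Lagrangian $L_\ga$, so that each argmin becomes a stationarity condition involving $\partial K$, $\partial H$, and the linear map $B=-A^*$.

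The core of the argument is an inductive verification. Assuming the three relations $\iter{v}=\iter{z}$, $\ga\iter{q}=\iter{w}-\iter{z}$, and $\ga A^*\iter{s}=\iter{z}^{(\ell-1)}-\iter{w}$ hold at step $\ell$, I would substitute them into the ADMM $s$-update to recover the first half of the DR iteration (the $\prox_{\ga G_1}$ step with $\al=1$, i.e. $\iiter{w}=\iter{w}+\prox_{\ga G_1}(2\iter{z}-\iter{w})-\iter{z}$), and then substitute into the ADMM $q$-update to recover the $\prox_{\ga G_2}$ step yielding $\iiter{z}$. The key algebraic identity powering this is the relationship between the proximal operator of a function and that of its Legendre--Fenchel conjugate, which is precisely Moreau's identity~\eqref{eq-moreau-identity} and its generalization to the $B$-metric proximal map established in Proposition~\ref{eq-proxB-proxD}. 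Indeed, the ADMM $s$-update is $\iiter{s}=\prox_{K/\ga}^B(\iter{q}-\iter{u})$, and Proposition~\ref{eq-proxB-proxD} rewrites this using $\prox_{\ga K^*\circ B^*}=\prox_{\ga G\circ A}=\prox_{\ga G_1}$, converting a dual-side operation into the primal-side proximal step of DR.

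The crucial bookkeeping is the translation between the multiplier variable $\iter{u}=\iter{v}/\ga$ of ADMM and the two DR variables $\iter{z},\iter{w}$. The relation $\ga\iter{q}=\iter{w}-\iter{z}$ encodes the auxiliary ADMM variable as the DR gap, while $\ga A^*\iiter{s}=\iter{z}-\iiter{w}$ ties the primal minimizer of the augmented Lagrangian to the DR reflection. I would carry the induction forward by checking that the ADMM multiplier update $\iiter{v}=\iter{v}+\ga(B\iiter{s}-\iiter{q})$, once rewritten in DR variables, reproduces exactly $\iiter{v}=\iiter{z}$, thereby closing the loop. The main obstacle I anticipate is purely a matter of careful sign and scaling management: the map $B=-A^*$ introduces sign flips, the factor $\ga$ rescales the multiplier, and conjugation swaps $F\leftrightarrow F^*$ and $G\leftrightarrow G^*$, so the delicate point is to keep all three bookkeeping identities mutually consistent through each of the three sub-steps rather than any deep analytic difficulty. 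Once the substitutions are organized correctly, each equivalence reduces to an application of Proposition~\ref{eq-proxB-proxD} together with the fact that $\al=1$ removes the relaxation term from the DR update.
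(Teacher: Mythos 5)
Your plan coincides with the paper's own proof: there too, the ADMM iterations are first rewritten in proximal form~\eqref{eq-admm-prox-iters}, Proposition~\ref{eq-proxB-proxD} and Moreau's identity~\eqref{eq-moreau-identity} (together with the sign rule $\prox_{\ga F \circ (-A)}(z) = -\prox_{\ga F \circ A}(-z)$) convert the $s$- and $q$-updates into a $\prox_{\ga F\circ A}=\prox_{\ga G_1}$ step and a $\prox_{\ga G}$ step, and the claimed change of variables is obtained by matching these against the $\al=1$ DR iterations and checking consistency between iterations $\ell$ and $\ell+1$ --- exactly the step-by-step identification you propose, with the paper solving the identification system~\eqref{eq-identif-1}--\eqref{eq-identif-4} in closed form rather than phrasing it as an induction. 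One notational slip to correct: since $K=F^*$ and $B^*=-A$, one has $K^*\circ B^* = F\circ(-A)$, not $G\circ A$; your endpoint $\prox_{\ga G_1}$ is right, but $G=\Jfunc$ enters only through the $q$-update via Moreau's identity, as your own description of the second sub-step already indicates.
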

\begin{proof}
	Denoting $\iter{\bar s} = A^* \iter{s} \in \Im(A^*)$ (recall that $B=-A^*$ is injective), using the change of notations~\eqref{eq-chg-var-dr-addmm} and using the result of Proposition~\ref{eq-proxB-proxD}, the iterates~\eqref{eq-admm-prox-iters} read
	\begin{equation}\label{eq-equiv-iter1}
		\begin{aligned}
		-\iiter{\bar s} &=  \iter{q} - \iter{u}  + \frac{1}{\ga} \prox_{\ga F \circ A}\pa{ \ga(\iter{u}-\iter{q}) }, \\
		\iiter{q} &=  \iter{u} -\iiter{\bar s} - \frac{1}{\ga} \prox_{\ga G}\pa{ \ga( \iter{u}-\iiter{\bar s} )}, \\
		\iiter{u} &= \iter{u} - \iiter{\bar s}-  \iiter{q} 
		\end{aligned}
	\end{equation}
where we have used the fact that $\prox_{\ga F \circ (-A)}(z) =  -\prox_{\ga F \circ A}(-z)$.
Recall that the DR iterations~\eqref{eq-dr-iters} read in the setting $\al=1$
\begin{equation}\label{eq-equiv-iter2}
  		\begin{aligned}
		\iiter{w} &= \iter{w}+\prox_{\ga F \circ A}(2\iter{z}-\iter{w})-\iter{z}, \\
		\iiter{z} &= \prox_{\ga G} (\iiter{w}).
		\end{aligned}
	\end{equation}
	Iterations~\eqref{eq-equiv-iter1} and~\eqref{eq-equiv-iter2} are using the following identification between $(\iter{q}, \iiter{q},\iter{u}, \iiter{\bar s})$ and 
	$(\iter{w}, \iter{z}, \iiter{w}, \iiter{z})$

	\begin{align}
		\ga (\iter{u}-\iter{q})  &= 2\iter{z}-\iter{w},\label{eq-identif-1}\\
		\ga ( \iter{u}-\iiter{\bar s}) &= \iiter{w} , \label{eq-identif-2}\\
		\ga( \iter{u}- \iter{q}   -\iiter{\bar s}   )&= \iiter{w}+\iter{z}-\iter{w}, \label{eq-identif-3}\\
		\ga( \iter{u}-\iiter{q}- \iiter{\bar s}     )&=  \iiter{z}, \label{eq-identif-4}
	\end{align}	
Solving the linear system~\eqref{eq-identif-1}-\eqref{eq-identif-4}, one gets
	\begin{align}
		\ga   \iter{u}  &= \iter{z},\label{eq-identif2-1}\\
		\ga  \iter{q} &= \iter{w}-\iter{z}, \label{eq-identif2-2}\\
		\ga  \iiter{\bar s} &= \iter{z}-\iiter{w}, \label{eq-identif2-3}\\
		\ga\iiter{q}&=\iiter{w}-\iiter{z}. \label{eq-identif2-4}
	\end{align}
First notice that relations~\eqref{eq-identif2-2} and~\eqref{eq-identif2-4} are compatible at iterations $\ell$ and $\ell+1$. 
Then, identifying the update of the variable $u$ presented in~\eqref{eq-equiv-iter1} with  relation~\eqref{eq-identif-4}, we recover $\iiter{z}=\ga\iiter{u}=\iiter{v}$, which corresponds to the relation~\eqref{eq-identif2-1} at iteration $\ell+1$.
\end{proof}

This proposition thus shows that, on a centered grid, ALG2 of~\cite{Benamou2000} gives the same iterates as DR on the primal problem. Since we consider a staggered grid, the use of the interpolation operator makes our optimization problem~\eqref{eq-dr-problem} different from the original ALG2 and requires the introduction of an auxiliary variable $V$. Furthermore, the introduction of an extra relaxation parameter $\al$ is useful to speed-up the convergence of the method, as will be established in the experimentations. Lastly, let us recall that it is possible to use the variants A-DR', S-DR and S-DR' of the initial A-DR formulation, as detailed in Section~\ref{DR-algo}.

\section{Generalized Cost Functions}
\label{sec-generalized}

Following \cite{dolbeault2009,Cardaliaguet2012}, we propose to use a generalized cost function that allows one to compute geodesics that interpolate between the $L^2$-Wasserstein and the $H^{-1}$ geodesics. To introduce further flexibility, we introduce spatially varying weights, which corresponds to approximating a transportation problem on a Riemannian manifold.

\subsection{Spatially Varying Interpolation between $L^2$-Wasserstein and $H^{-1}$ }\label{sec:generalized_cost}

We define our discretized generalized transportation problem as
\eql{ \label{eq-optim-bb-gen}
	\umin{U \in \Es} \Jfunc_\bet^w(\interp(U)) + \iota_\Cinc(U),
}
where the vector of weights $w = (w_k)_{k \in \Gc}$ satisfies $c < w_k \leq +\infty$ and  $c>0$ is a small constant. Note that we allow for infinite weights $w_k=+\infty$, which corresponds to forbidding the transport to put mass in a given cell indexed by $k$. The generalized functional   reads, for $\be \geq 0 $
\eql{\label{eq:generelized_cost}
	\Jfunc_\bet^w(V) = \sum_{k \in \Gc} w_k \jfunc_\bet(m_k,f_k), 
}
\eql{
	\foralls (m,  f) \in \RR^d \times \RR, \quad
	\jfunc_\be(m,f) = 
	\left\{\begin{array}{cl}
		\frac{\norm{m}^2}{2f^\bet} &\text{if }  f>0, \\
		0 &\text{if } (m,  f) = (0,0), \\
		+\infty &\text{otherwise}.
	\end{array}\right.	
}
Note that $\Jfunc_1^1 = \Jfunc$ and that for $\be \in [0,1]$, $\Jfunc_\bet^w$ is convex. 
Furthermore, one has, for $f>0$  
\eql{\label{hessian}
	\det( \partial^2 \jfunc_\be(m,f) ) = \frac{\bet(1-\bet)\norm{m}^2}{f^{3\bet+2}}.
}
This shows that $\jfunc_\be$ is strictly convex on $\RR^{*,d} \times \RR^{+,*}$ for $\be \in ]0,1[$.

The case of constant weights $w_k=1$ is studied in~\cite{dolbeault2009,Cardaliaguet2012}. The case $\be=1$ corresponds to the Wasserstein $L^2$ distance. In a continuous (not discretized) domain, the value of the problem~\eqref{eq-optim-bb-gen} is equal to the $H^{-1}$ Sobolev norm over densities $\norm{f_0-f_1}_{H^{-1}}$, as detailed in~\cite{dolbeault2009}. Note that since in this case the induced distance is actually an Hilbertian norm, the corresponding geodesic is a linear interpolation of the input measures, and thus for measures having densities, one obtains $f_t = (1-t) f_0 + t f_1$.

When $\be=1$ and the weights $w_k$ are constant in time, the solution of~\eqref{eq:generelized_cost} discretizes the displacement interpolation between the densities $(f^0,f^1)$ for a ground cost being the squared geodesic distance on a Riemannian manifold (see Section~\ref{sec-previous} for more details). Note that we restrict our attention to isotropic Riemannian metrics (being proportional to the identity at each point), but this extends to arbitrary Riemannian or even Finsler metrics.  Studying the properties of this transportation distance is however not the purpose of this work, and we show in Section~\ref{subsec-riemanian-examples} numerical illustrations of the influence of $w$.

\subsection{Computing $\prox_{\ga J_\bet}$}

The following proposition, which generalizes Proposition~\ref{proposition1}, shows how to compute $\prox_{\ga \Jfunc_\bet^w}$.

\begin{prop}\label{prop-generalized}
	One has
	\eq{
		\foralls V \in \Ec, 
		\quad \prox_{\ga \Jfunc_\be^w}(V) = ( \prox_{\ga w_k \jfunc_\be}(V_k) )_{k \in \Gc}
	}
	where, for all $(\tilde m,\tilde f) \in \RR^d \times \RR$, 
	\eq{
		\prox_{\ga \jfunc_\be}(\tilde m,\tilde f) = 
		\left\{\begin{array}{cl}
	  		(\mu(f^{\star}),f^\star) &\text{if } f^\star > 0 \text{ and  }\ga< \infty, \\
		  	(0,0) &\text{otherwise}.\end{array}\right.
	}
	\eql{\label{eq:mstar-bis}
		\qwhereq
		\foralls f \geq 0, \quad \mu(f) = \frac{f^\bet \tilde m}{f^\bet+\ga} \in \RR^d
	}
	and $f^\star$ is the largest real root of the following equation in $X$
	\eql{\label{eq:poly_alpha}
     	P_\bet(X) = X^{1-\bet} (X-\tilde f)(X^\bet+\ga)^2  - \frac{\ga}2\bet \norm{\tilde m}^2 = 0
	}
\end{prop}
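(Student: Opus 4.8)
The plan is to mirror the structure of the proof of Proposition~\ref{proposition1}, since Proposition~\ref{prop-generalized} is its direct generalization with $\be=1$ recovering the original. First I would reduce the computation of $\prox_{\ga \Jfunc_\be^w}$ to a pointwise computation over the grid: because $\Jfunc_\be^w(V) = \sum_{k \in \Gc} w_k \jfunc_\be(m_k,f_k)$ is fully separable across the indices $k$, the definition~\eqref{eq:def_prox} splits into independent minimizations, one per cell. Absorbing the weight $w_k$ into the step size (i.e. $\ga w_k \jfunc_\be$ plays the role of a single scaled functional), this reduces everything to computing $\prox_{\ga \jfunc_\be}(\tilde m,\tilde f)$ for a single pair $(\tilde m,\tilde f) \in \RR^d \times \RR$.

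Next I would treat the case $f > 0$. The excerpt has established via~\eqref{hessian} that $\jfunc_\be$ is strictly convex on $\RR^{*,d} \times \RR^{+,*}$ for $\be \in ]0,1[$, and it is $C^1$ there, so the proximal minimization has a unique interior critical point characterized by $\nabla\big(\tfrac12\norm{(\tilde m,\tilde f)-(m,f)}^2 + \ga \jfunc_\be(m,f)\big)=0$. Writing out this gradient system explicitly gives two equations: one from differentiating in $m$, namely $\ga \frac{m}{f^\bet} + m - \tilde m = 0$, which rearranges to $m = \frac{f^\bet \tilde m}{f^\bet + \ga} = \mu(f)$ as in~\eqref{eq:mstar-bis}; and one from differentiating in $f$, namely $-\ga\frac{\bet\norm{m}^2}{2 f^{\bet+1}} + f - \tilde f = 0$. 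Substituting $m = \mu(f)$ into the second equation and clearing denominators is then the routine algebra that must collapse to the polynomial-type relation $P_\be(X)=0$ in~\eqref{eq:poly_alpha}; I would verify that multiplying through by the appropriate power of $f$ produces exactly $X^{1-\bet}(X-\tilde f)(X^\bet+\ga)^2 - \tfrac{\ga}{2}\bet\norm{\tilde m}^2$.

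For the boundary case I would argue, exactly as in Proposition~\ref{proposition1}, that if $f=0$ then by the definition of $\jfunc_\be$ finiteness of the objective forces $m=0$, yielding the $(0,0)$ branch; and strict convexity guarantees that whenever $P_\be$ admits a strictly positive root it is the relevant unique interior minimizer, so selecting the largest positive root $f^\star$ and comparing the objective value there against the value at $(0,0)$ determines which branch is active. The extra clause ``$\ga < \infty$'' in the statement handles the allowed infinite weights: when $w_k = +\infty$ the scaled step size is infinite, the quadratic penalty becomes negligible relative to the constraint, and the prox collapses to $(0,0)$, forbidding mass in that cell, which I would note separately.

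The main obstacle I anticipate is the algebraic manipulation of the $f$-equation for general non-integer $\be$: unlike the $\be=1$ case where everything is a genuine polynomial, here $f^\bet$ and $f^{1-\bet}$ appear, so one does not literally obtain a cubic but rather the transcendental-looking relation~\eqref{eq:poly_alpha}, and care is needed to clear the fractional powers consistently (multiplying by $f^{\bet+1}$ and regrouping) without introducing spurious roots. The secondary subtlety is justifying uniqueness of the positive root and the selection of the largest one: strict convexity from~\eqref{hessian} gives uniqueness of the interior critical point, but I would want to confirm that the map $f \mapsto \jfunc_\be(\mu(f),f)$ behaves monotonically enough that ``largest positive root of $P_\be$'' indeed coincides with the global minimizer rather than a spurious stationary point of the reduced one-dimensional problem.
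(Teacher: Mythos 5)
Your plan matches the paper's proof essentially step for step: separability of $\Jfunc_\be^w$ reduces everything to the pointwise prox with $\ga w_k$ as effective step size, the interior case $f>0$ is handled by the first-order conditions $\ga m/f^\bet + m - \tilde m = 0$ and $-\ga\bet\norm{m}^2/(2f^{\bet+1}) + f - \tilde f = 0$, which after substituting $m=\mu(f)$ and multiplying by $f^{1-\bet}(f^\bet+\ga)^2$ yield exactly $P_\be(f)=0$, and the boundary case forces $(0,0)$ by definition of $\jfunc_\be$ (the paper's own proof, like yours, treats the $\ga=\infty$ clause only in passing). The one small correction: you justify uniqueness of the interior critical point by strict convexity of $\jfunc_\be$ from~\eqref{hessian}, which only holds for $\be\in\,]0,1[$ and $m\neq 0$, whereas the paper obtains it more simply (and for all $\be\in[0,1]$, including the main case $\be=1$) from the strong convexity of the full prox objective $\Phi_\be$ contributed by the quadratic term $\frac12\norm{(m,f)-(\tilde m,\tilde f)}^2$ --- with that substitution, which also settles your worry that the unique positive root of $P_\be$ is the global minimizer and hence coincides with the largest real root when positive, your argument is exactly the published one.
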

\begin{proof}
	We denote $(\bar m,\bar f) = \prox_{\ga \jfunc_\be}(\tilde m,\tilde f)$, and
	\eq{
		\foralls (m,f) \in \RR^d \times \RR, \quad
		\Phi_\be(m,f) = \frac{1}{2}\norm{ (m,f) - (\tilde m, \tilde f) }^2 + \ga \jfunc_\be( m,f ).
	}
	If $\bar f > 0$, since $\Phi_\be$ is $C^1$ and is strongly convex on $\RR^{d} \times \RR^{+,*}$, 
	necessarily $(\bar m,\bar f)$ is the unique solution of $\nabla \Phi_\be(\bar m,\bar f)=0$, which reads
	\eq{
		\choice{
    	  		 \ga \frac{\bar m}{\bar f^\bet} + \bar m-\tilde m=0 \\
       		-\ga \bet\frac{\norm{\bar m}^2}{2\bar f^{\bet+1}} + \bar f - \tilde f = 0.
     	}
	}
	Reformulating these equations leads to the following equivalent conditions
	\eq{
		P_\be(\bar f) = 0
		\qandq
		\bar m = \mu_\be(\bar f).
	}
	This shows that if $P_\be$ as at least a strictly positive real root $f^\star$, it is necessarily unique and that $(\bar f=f^\star,\bar m=\mu_\be(f^\star))$. Otherwise, necessarily $(\bar f = 0, \bar m = 0)$.
\end{proof}

Note that when $\be=p/q \in \QQ$ is a rational number, equation $P_\be(X)=0$ corresponds to finding the root of a polynomial. It can be solved efficiently using a few Newton descent iterations starting from a large enough value of $f$.



\section{Numerical Simulations}

\subsection{Comparison of Proximal Schemes}

This section compares the following algorithms introduced in Section~\ref{sec-proximal}:
\begin{itemize}
	\item[--] Douglas-Rachford (DR in the following) as exposed in Section~\ref{DR-algo}, parameterized with $\al$ and $\ga$ ;
	\item[--] ADDM on the dual (ADDM in the following, which is DR with $\al=1$) parameterized with~$\ga$ ; 
	\item[--] Primal-dual (PD in the following) as exposed in Section~\ref{PD-algo}, parameterized with~$\si$ and $\tau$.
\end{itemize}
Note that this ADMM formulation is related to the ALG2 method introduced in~\cite{Benamou2000}, but is computed over a staggered grid. For the DR algorithm, we first compared the $4$ possible implementations previously described. It appears in our experiments that A-DR and A-DR' (resp. S-DR and S-DR') have almost the same behavior. 

The first comparison is done on a simple example with two 2-D isotropic Gaussian distributions $(f^0,f^1)$ with the same variance. In the continuous case, the solution is known to be a translation between the mean of the Gaussians. The spatial domain is here of dimension $d=2$ and it is discretized on a grid with $N=M=32$ points for both each dimension. The temporal  discretization has also been fixed to $P=32$. We first compute an (almost) exact reference solution $(m^\star,f^\star)$ of the discrete problem with $10^6$ iterations of the DR. The obtained transported mass $f^\star(\cdot,t)$ is illustrated in Figure \ref{fig:data_bump}. Regarding the computation time, with a bi-processor system Intel Core i7 with 2.4 GHz, $1000$ iterations are done in $45$ seconds for a $32^3$ domain with our Matlab implementation.

\begin{figure}[!ht]
\begin{center}
\begin{tabular}{ccccc}
\includegraphics[width=2.5cm]{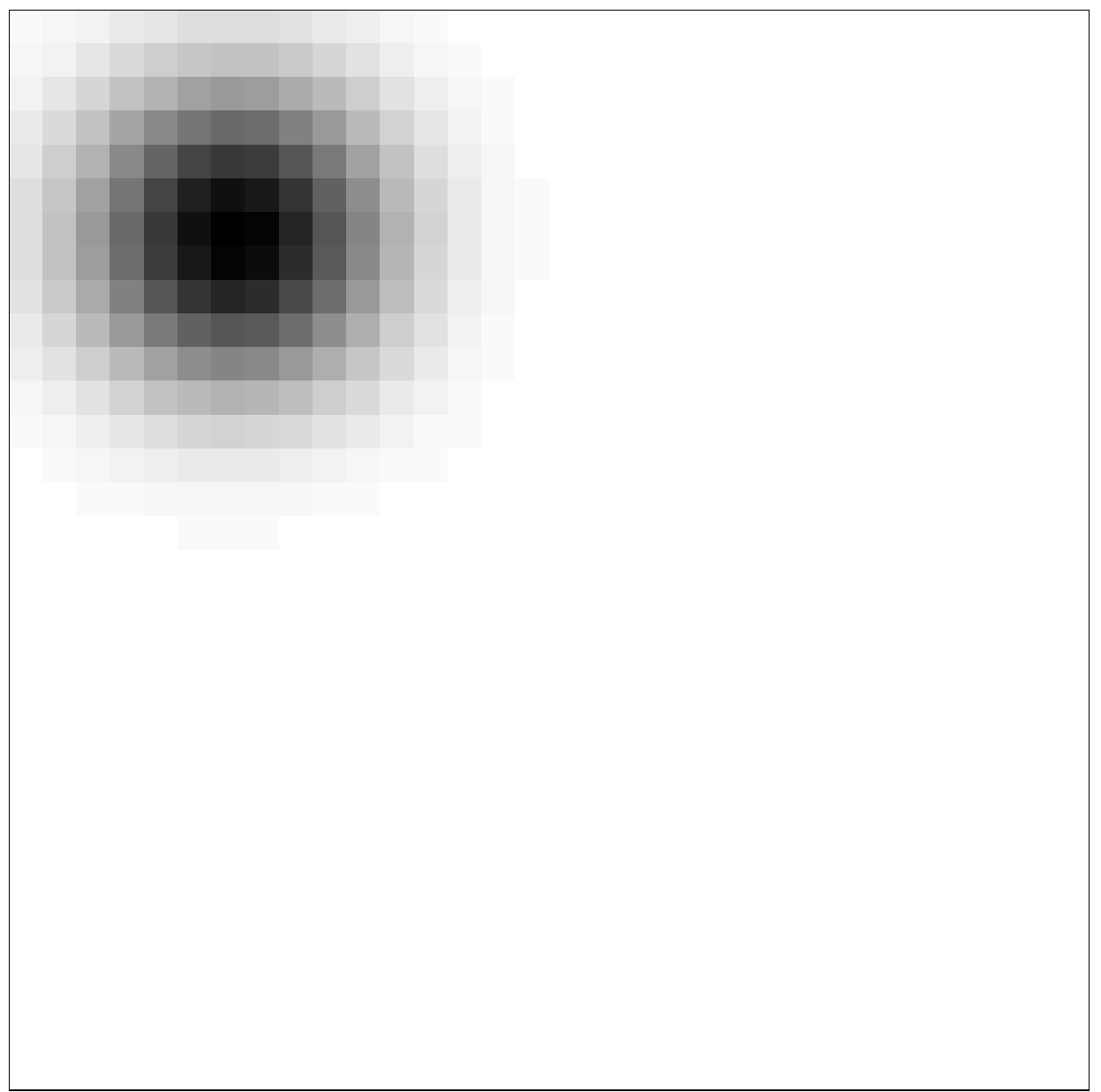}&
\includegraphics[width=2.5cm]{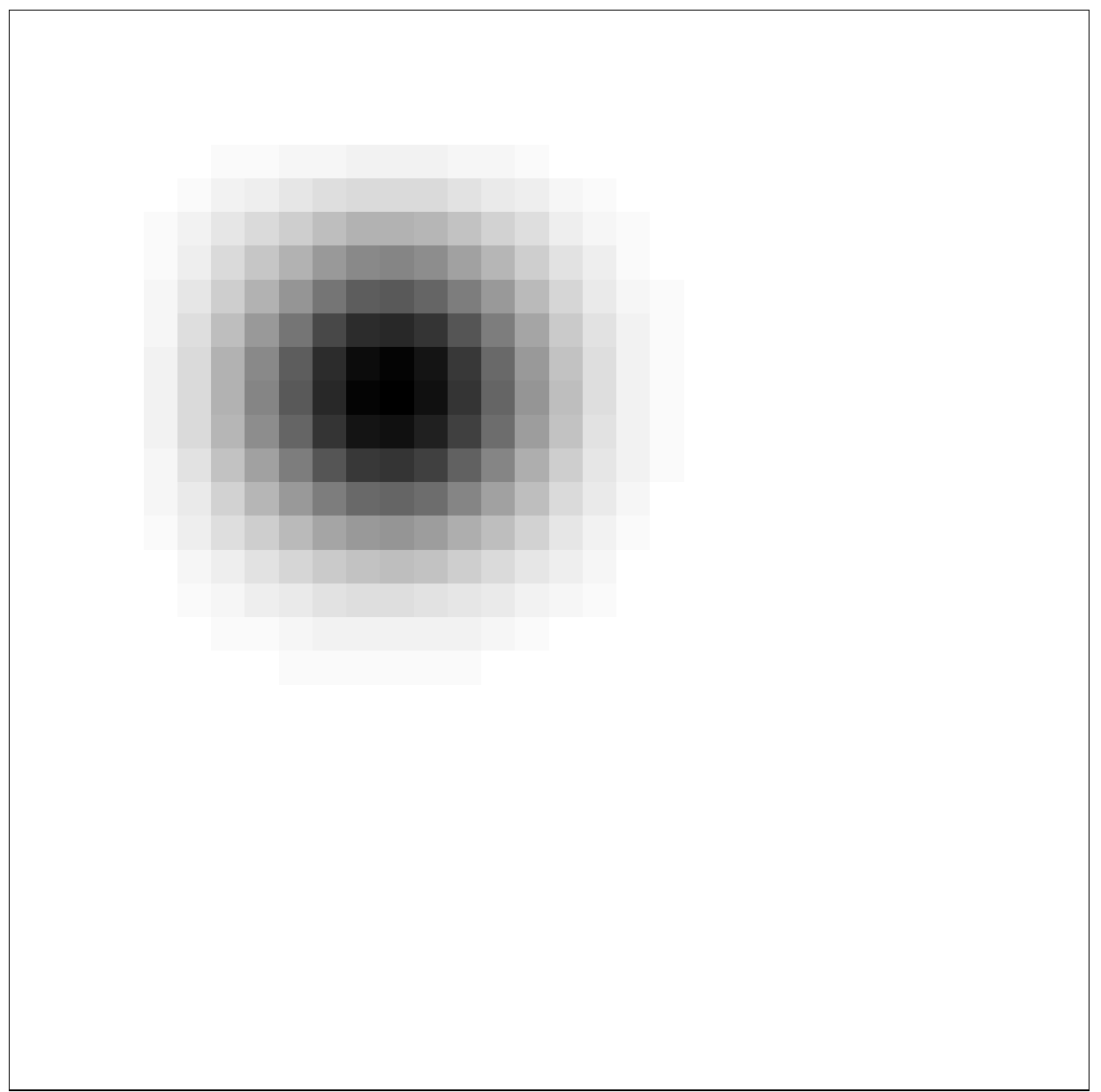}&
\includegraphics[width=2.5cm]{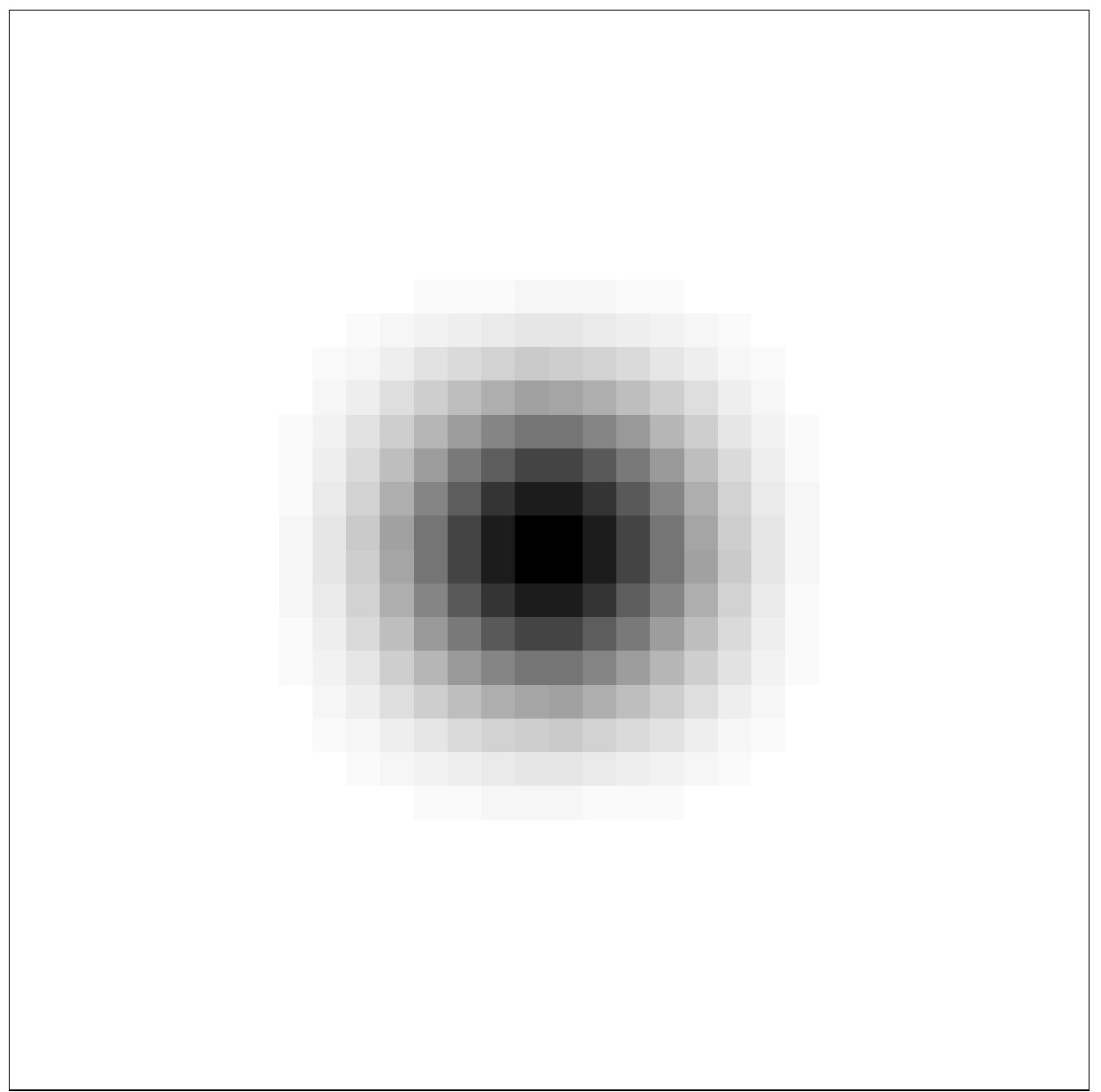}&
\includegraphics[width=2.5cm]{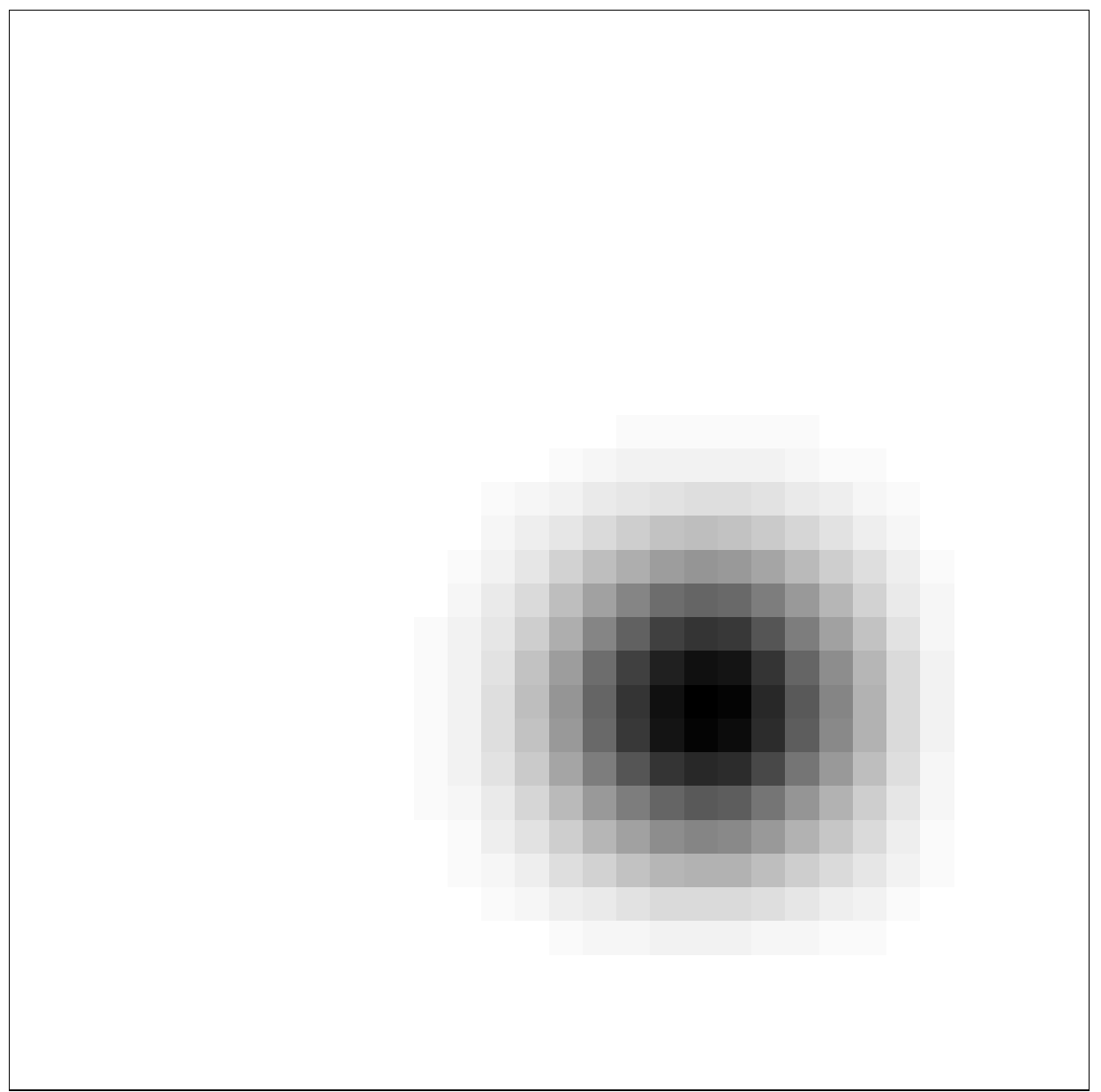}&
\includegraphics[width=2.5cm]{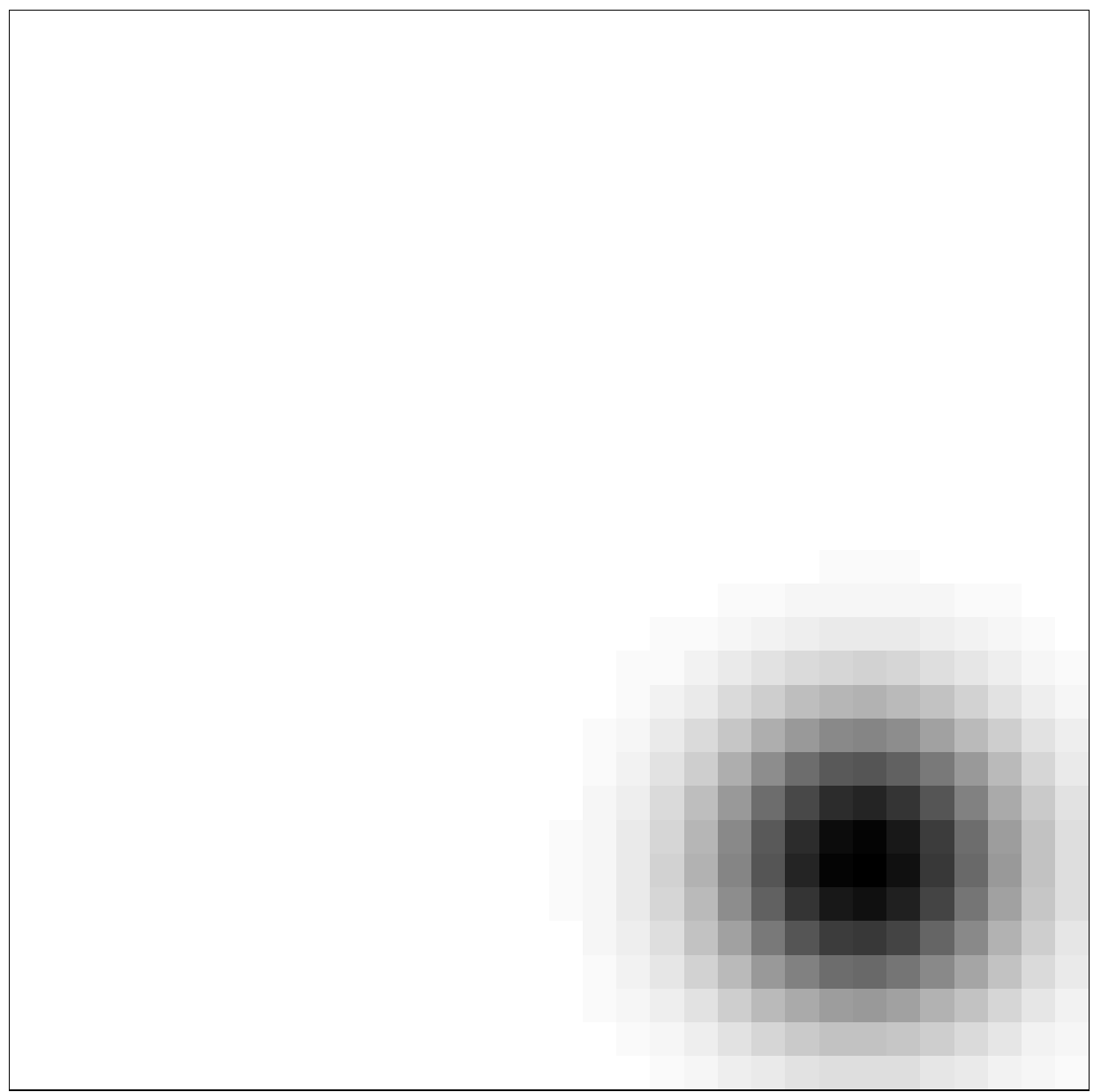}\\
$t=0$&$t=1/4$&$t=1/2$&$t=3/4$&$t=1$ 
\end{tabular}
\caption{\label{fig:data_bump} 
Display of $f^\star(\cdot,t)$ for several value of $t$ (note that for $t=0$ and $t=1$, this corresponds to $f^0$ and $f^1$). The grayscale values are linearly interpolating from black to white between 0 and the maximum value of $f^\star$. 
}
\end{center}
\vspace{3mm}
\end{figure}

For each algorithm, we perform an exhaustive search of the best possible set of parameters. These optimal parameters are those minimizing $\norm{(m^\star,f^\star) - (\iter{m},\iter{f})}$, the $\ell^2$ distance between $f^\star$ and the output of the algorithm after $\ell=500$ iterations. The optimal parameters for this data set are:  $\ga=1/80$ for ADMM on the dual, $(\ga=1/75,\al=1.998)$ for DR and $\si=85$ for PD. For PD, we found that simply setting $\tau=\frac{0.99}{\si \norm{\interp}^2}$ leads to almost optimal convergence rate in our tests, so we use this rule to only introduce a single parameter $\si$. Notice that this parameter choice is within the range of parameters $\sigma \tau \norm{\interp}^2<1$ that guaranties convergence of the PD method. Figure~\ref{fig:comp_bump} displays, for this optimal choice of parameters, the evolution of the cost function value as well as the  convergence on the staggered grid toward $(m^\star,f^\star)$ as a function of the iteration number $\ell$. 

One can observe that the quality of the approximation can not easily be deduced from the cost function  evolution alone since the functional is very flat. Indeed, an almost minimal value of the function  is reached by all the algorithms after roughly $10^3$ iterations, whereas the $\ell^2$ distance to the reference solution continues to decrease almost linearly in log-log scale. The last plot of the Figure \ref{fig:comp_bump} shows that asymptotically, all methods tend to satisfy the positivity constraint on the staggered grid at the same rate.

\begin{figure}[!ht]
\begin{center}
\begin{tabular}{cc}
\includegraphics[trim=25 5 45 20,clip,width=0.45\textwidth]{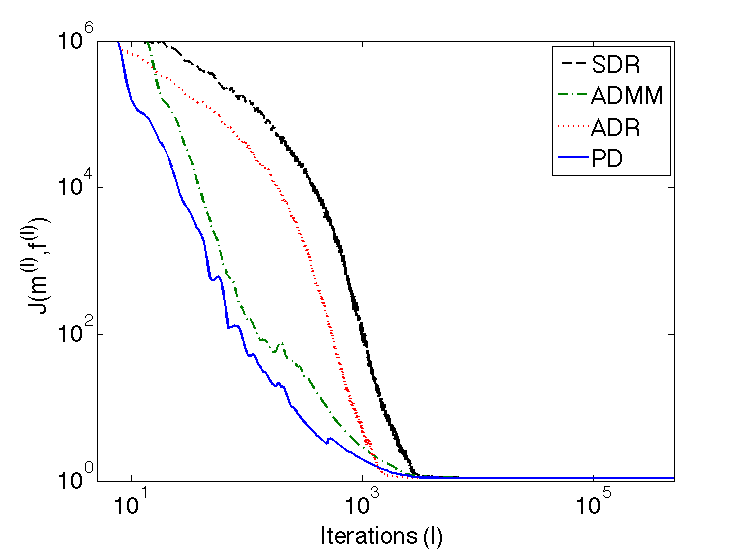}&\hspace{-0.15cm}
\includegraphics[trim=25 5 45 20,clip,width=0.45\textwidth]{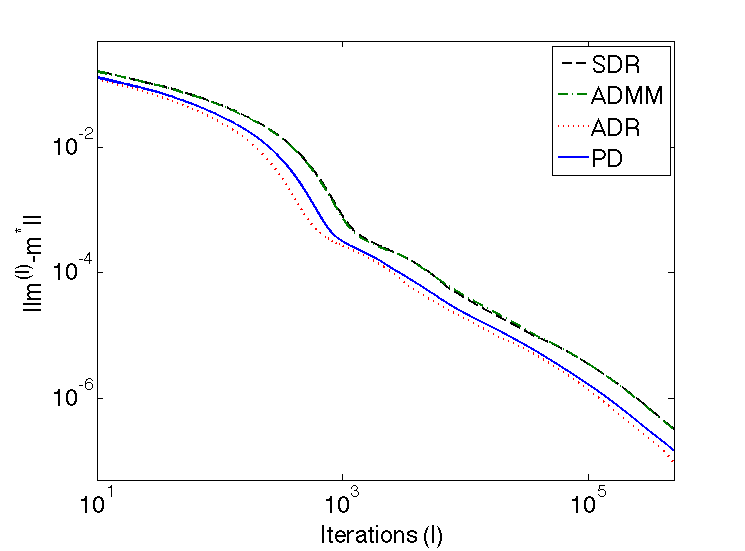}\vspace{0.1cm}\\
$\Jfunc( \iter{m}, \iter{f} )$&\hspace{-0.15cm}$\norm{m^\star-\iter{m}}$\\
\includegraphics[trim=25 5 45 20,clip,width=0.45\textwidth]{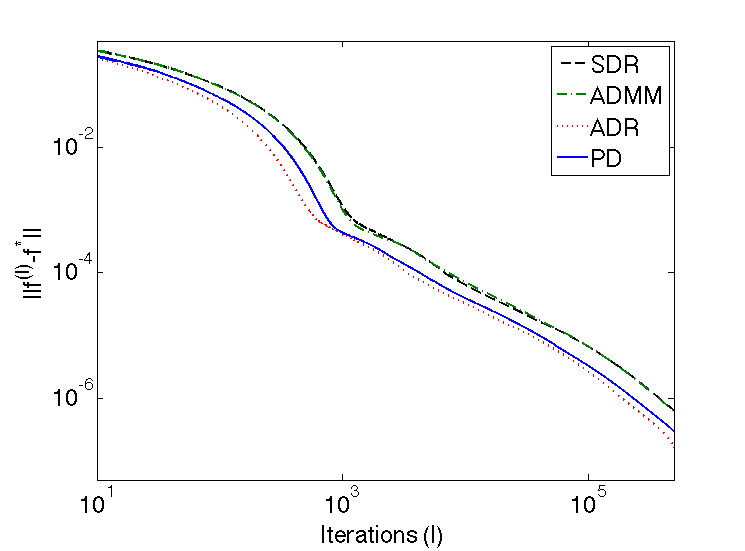}&\hspace{-0.15cm}
\includegraphics[trim=30 5 45 20,clip,width=0.45\textwidth]{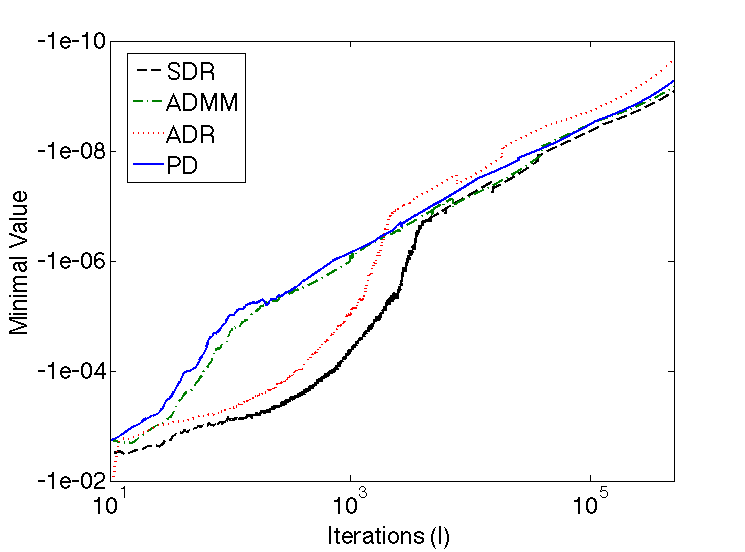}\vspace{0.1cm}\\
$ \norm{f^\star-\iter{f}}$ &\hspace{-0.15cm}Mininimum value of $\iter{f}$
\end{tabular}
\caption{\label{fig:comp_bump} 
	At each iteration $\ell$ on the staggered grid, we plot in the log-log scale the value of the cost function $\Jfunc$, the distance between the reference solution $(m^\star,f^\star)$ and the estimation $(\iter{m},\iter{f})$ and the current minimum value of $\iter{f}$ for the different proximal splitting algorithms with the best found parameters. }
\end{center}
\end{figure}

When comparing the three approaches, A-DR and A-PD shows the fastest convergence rate to the reference solution and then, the S-DR algorithm performs equally good as ADMM. This shows the advantage of introducing the $\al$ parameter, while symmetrizing the DR does not speed-up the convergence.  Notice also that the computational cost is smaller for PD, as it takes $0.13$s for one PD iteration and $0.2s$ for one DR or ADMM iteration for this example.  Note that these convergence results are  related to this specific example, but they illustrate the general behaviour of the different algorithms. 



Finally, Figure~\ref{fig:indic} shows an experiment in the context of vanishing and irregular densities.  This figure shows the geodesic, computed with the PD algorithm, between two characteristic functions of two connected sets, one being convex. Note that the geodesic is not composed of characteristic functions of sets, which is to be expected. This shows the ability of our methods to cope with vanishing densities.

\begin{figure}[!ht]
\begin{center}
\begin{tabular}{@{}c@{}c@{}c@{}c@{}c@{}}
\includegraphics[width=2.9cm]{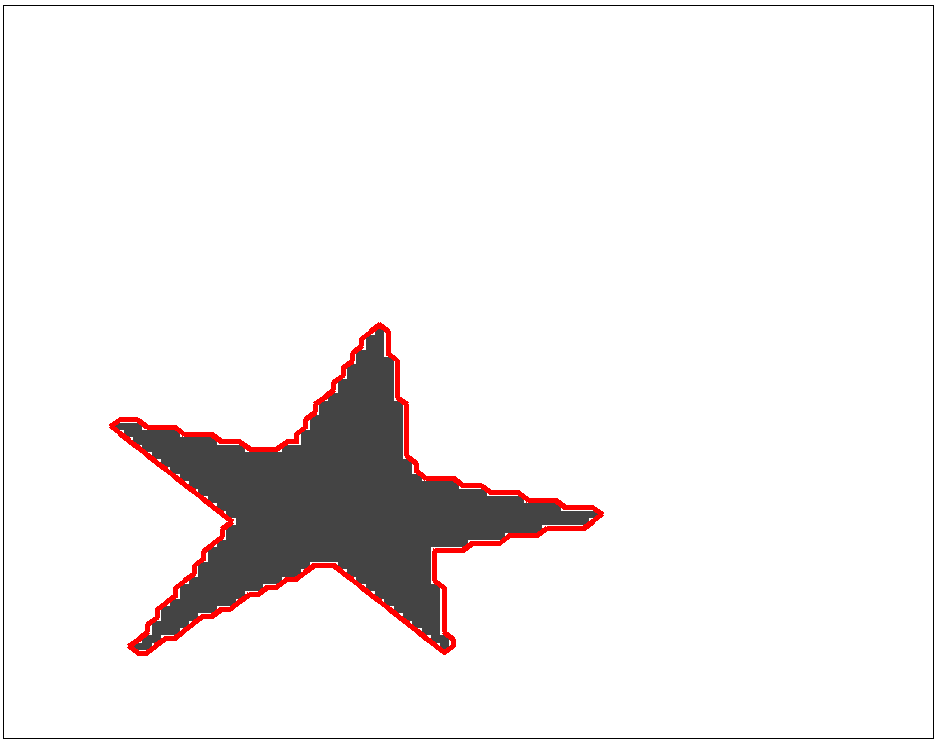}&
\includegraphics[width=2.9cm]{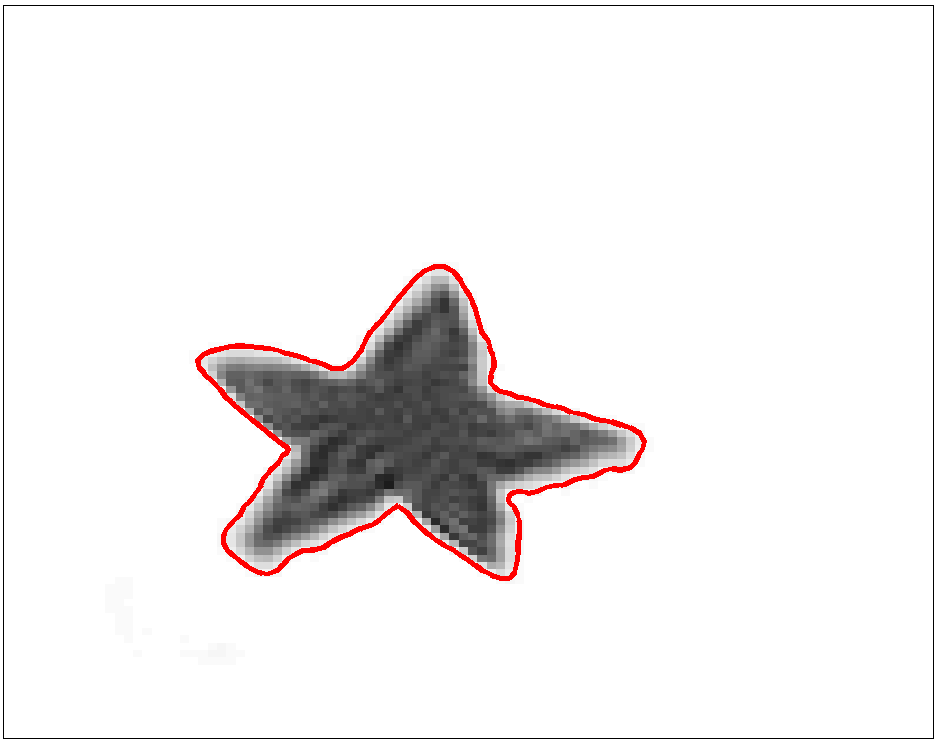}&
\includegraphics[width=2.9cm]{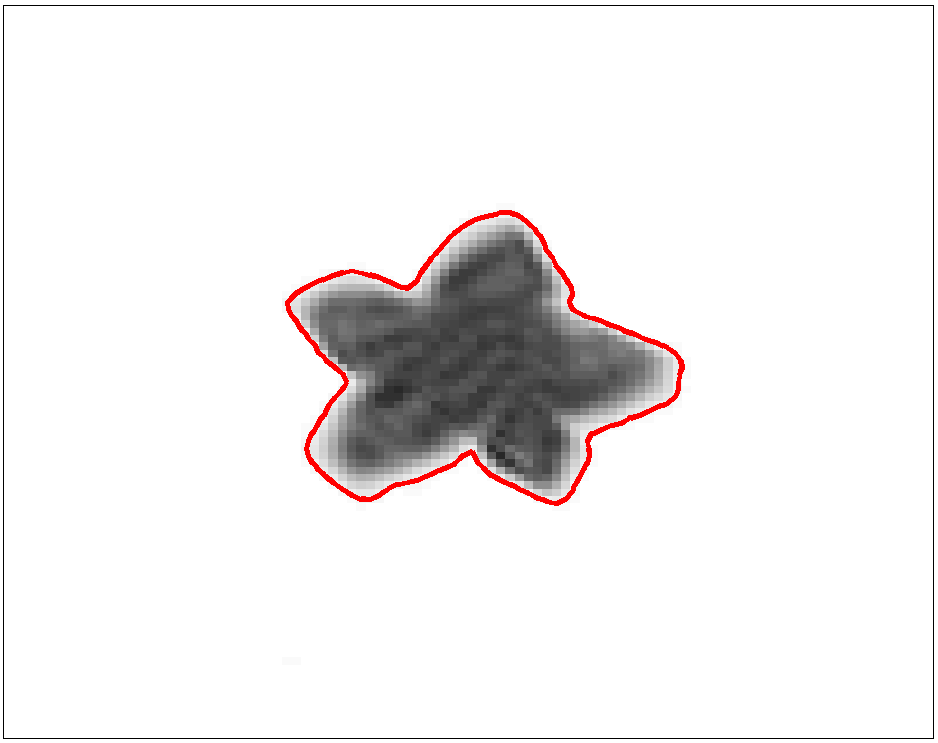}&
\includegraphics[width=2.9cm]{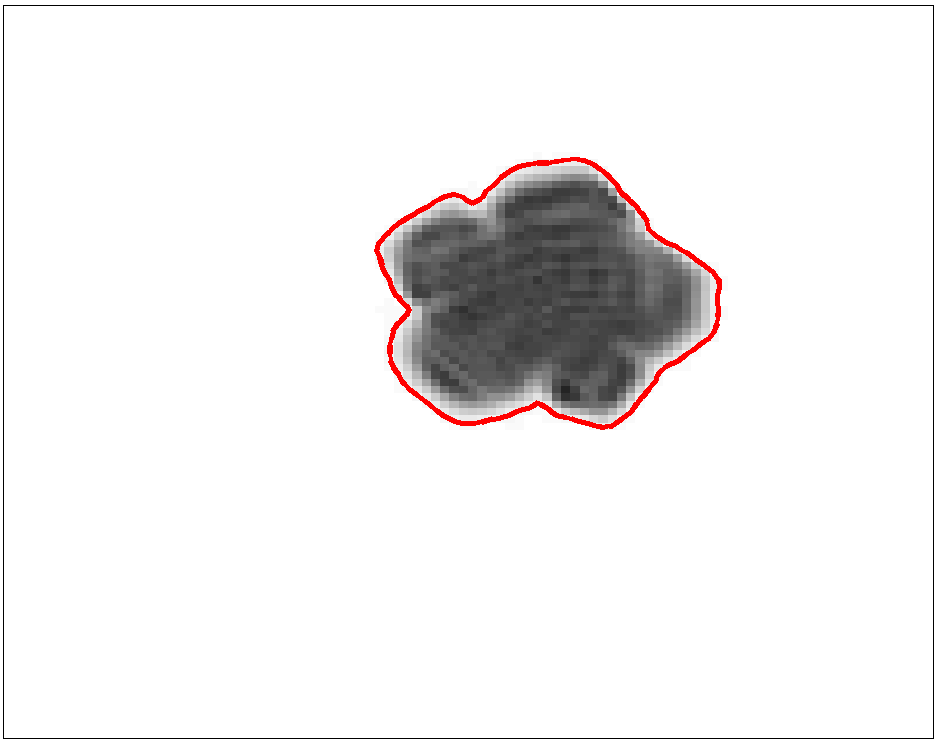}&
\includegraphics[width=2.9cm]{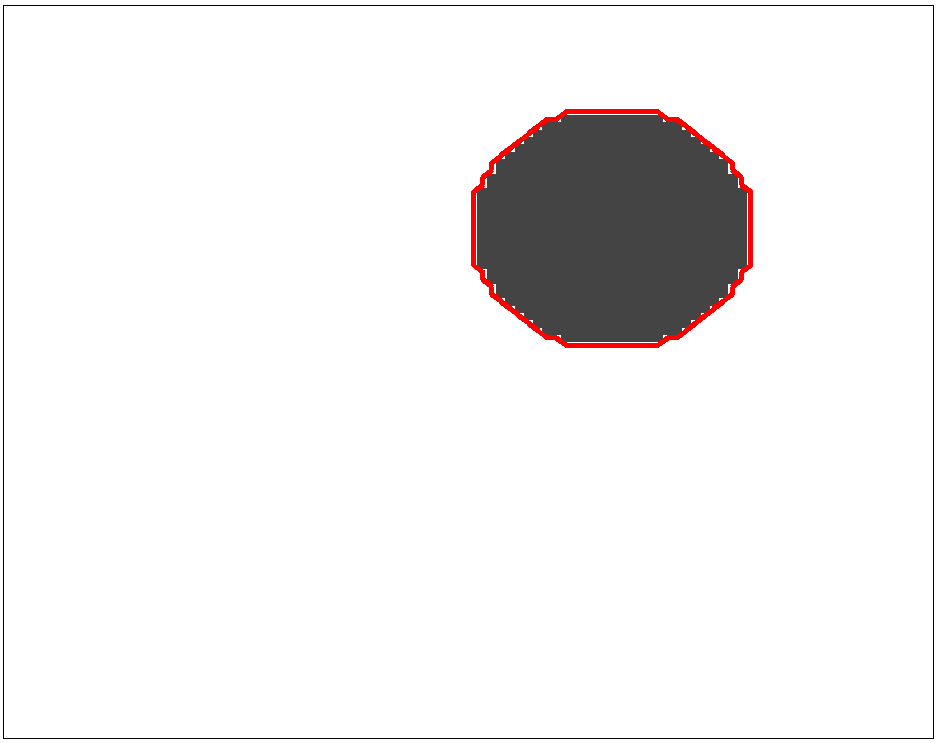}\\
$t=0$&
$t=1/4$&
$t=1/2$&
$t=3/4$&
$t=1$
\end{tabular}
\caption{\label{fig:indic} Transport between characteristic functions. Evolution of $f^\star(\cdot,t)$ for several values of $t$. The red curve denotes the boundary the area with positive density. }
\end{center}
\end{figure}

\subsection{Interpolation Between $L^2$-Wasserstein and $H^{-1}$}

We first apply the PD algorithm for different values of $\beta$ on the bump example introduced in the previous section. The results are presented in the Figure~\ref{fig:generalized_bump}, which shows the level-lines of the estimated densities $\iter{f}(\cdot,t)$ for $\ell=1000$ iterations. It shows the evolution of the solution between a linear interpolation of the densities ($\beta=0$) and a displacement interpolation with transport ($\beta=1$).

\newcommand{\sidecap}[1]{ {\begin{sideways}\parbox{1.4cm}{\centering #1}\end{sideways}} }
\newcommand{\myfigBeta}[1]{\includegraphics[width=.105\linewidth]{images/bump_beta/bump_beta_#1}}

\begin{figure}[!ht]
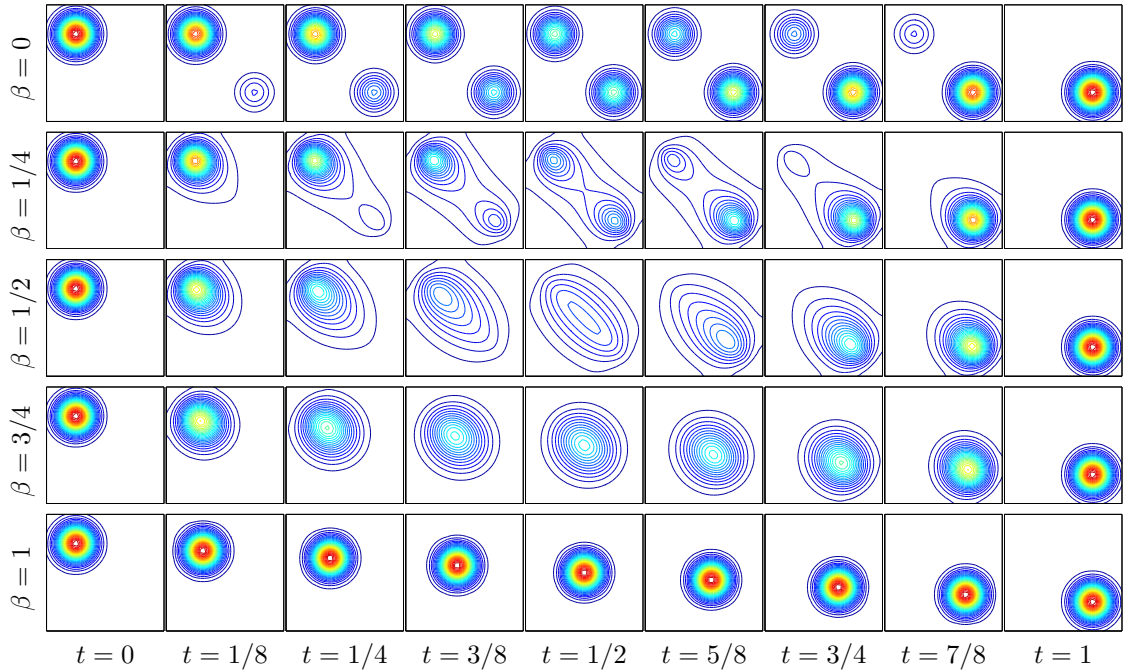

\begin{center}
\begin{tabular}{@{}c@{}c@{}c@{}c@{}c@{}c@{}c@{}c@{}c@{}c@{}}
\sidecap{$\beta=0$ } &
\myfigBeta{0_iso_01}&
\myfigBeta{0_iso_05}&
\myfigBeta{0_iso_09}&
\myfigBeta{0_iso_13}&
\myfigBeta{0_iso_17}&
\myfigBeta{0_iso_21}&
\myfigBeta{0_iso_25}&
\myfigBeta{0_iso_29}&
\myfigBeta{0_iso_33}\\
\sidecap{$\beta=1/4$ } &
\myfigBeta{25_iso_01}&
\myfigBeta{25_iso_05}&
\myfigBeta{25_iso_09}&
\myfigBeta{25_iso_13}&
\myfigBeta{25_iso_17}&
\myfigBeta{25_iso_21}&
\myfigBeta{25_iso_25}&
\myfigBeta{25_iso_29}&
\myfigBeta{25_iso_33}\\
\sidecap{$\beta=1/2$ } &
\myfigBeta{50_iso_01}&
\myfigBeta{50_iso_05}&
\myfigBeta{50_iso_09}&
\myfigBeta{50_iso_13}&
\myfigBeta{50_iso_17}&
\myfigBeta{50_iso_21}&
\myfigBeta{50_iso_25}&
\myfigBeta{50_iso_29}&
\myfigBeta{50_iso_33}\\
\sidecap{$\beta=3/4$ } &
\myfigBeta{75_iso_01}&
\myfigBeta{75_iso_05}&
\myfigBeta{75_iso_09}&
\myfigBeta{75_iso_13}&
\myfigBeta{75_iso_17}&
\myfigBeta{75_iso_21}&
\myfigBeta{75_iso_25}&
\myfigBeta{75_iso_29}&
\myfigBeta{75_iso_33}\\
\sidecap{$\beta=1$ } &
\myfigBeta{100_iso_01}&
\myfigBeta{100_iso_05}&
\myfigBeta{100_iso_09}&
\myfigBeta{100_iso_13}&
\myfigBeta{100_iso_17}&
\myfigBeta{100_iso_21}&
\myfigBeta{100_iso_25}&
\myfigBeta{100_iso_29}&
\myfigBeta{100_iso_33}\\
&$t=0$&$t=1/8$&
$t=1/4$&$t=3/8$&
$t=1/2$&$t=5/8$&
$t=3/4$&$t=7/8$&$t=1$\vspace{-0.2cm}
\end{tabular}
\caption{\label{fig:generalized_bump} 
Display of the level sets of $\iter{f}(\cdot,t)$ for several value of $t$ and $\be$ (note that for $t=0$ and $t=1$, this corresponds to $f^0$ and $f^1$).}
\end{center}
\end{figure}

Figure~\ref{fig:evol_bump_beta} shows for $\beta = 1/2$ and $\be = 3/4$ the evolution of the cost function with the iterations index $\ell$, together with the convergence of the estimate to the reference solution $(m^\star,f^\star)$ (obtained after $10^5$ iterations of the PD algorithm). We can observe that the behavior of the process with $\beta \in ]0,1[$ is different than the one observed for $\beta=1$ in Figure \ref{fig:comp_bump}. Indeed, we observe a faster convergence of the functional value, which is consistent with the fact that $\Jfunc_\bet$ becomes more and more strongly convex as $\be$ approaches $1/2$ (see \eqref{hessian}). The oscillations come from the Newton's descent that only approximate the computation of $\prox_{\ga J_\bet}$.

\renewcommand{\sidecap}[1]{ {\begin{sideways}\parbox{4cm}{\centering #1}\end{sideways}} }

\begin{figure}[!ht]
\begin{center}
\begin{tabular}{@{}l@{}c@{}c@{}c@{}}
\sidecap{$\beta=0.5$ } &
\includegraphics[trim=30 10 40 20,clip,width=0.31\textwidth]{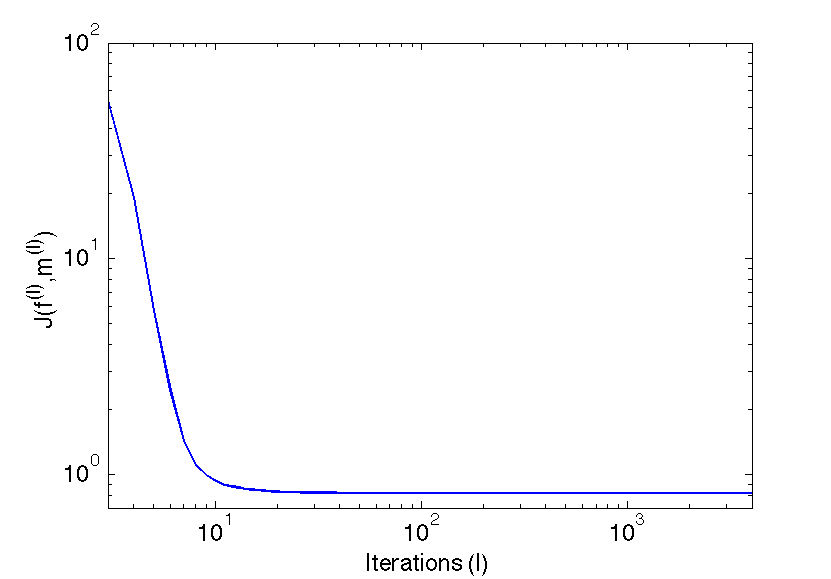}&
\includegraphics[trim=30 10 40 20,clip,width=0.31\textwidth]{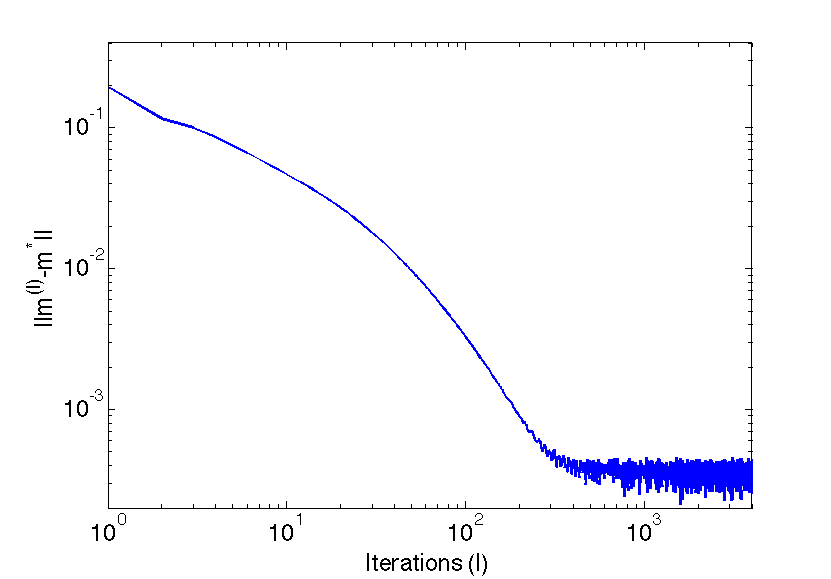}&
\includegraphics[trim=30 10 40 20,clip,width=0.31\textwidth]{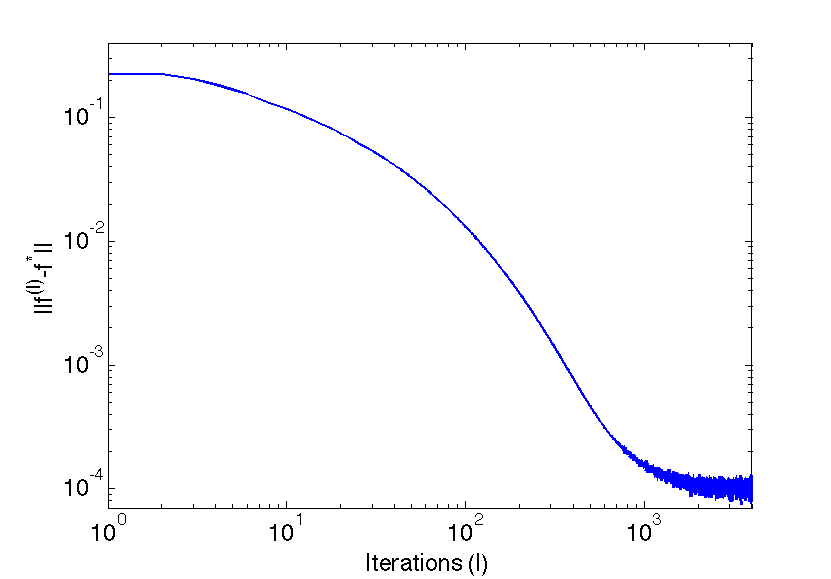}\\
\sidecap{$\beta=3/4$ } &
\includegraphics[trim=30 10 40 20,clip,width=0.31\textwidth]{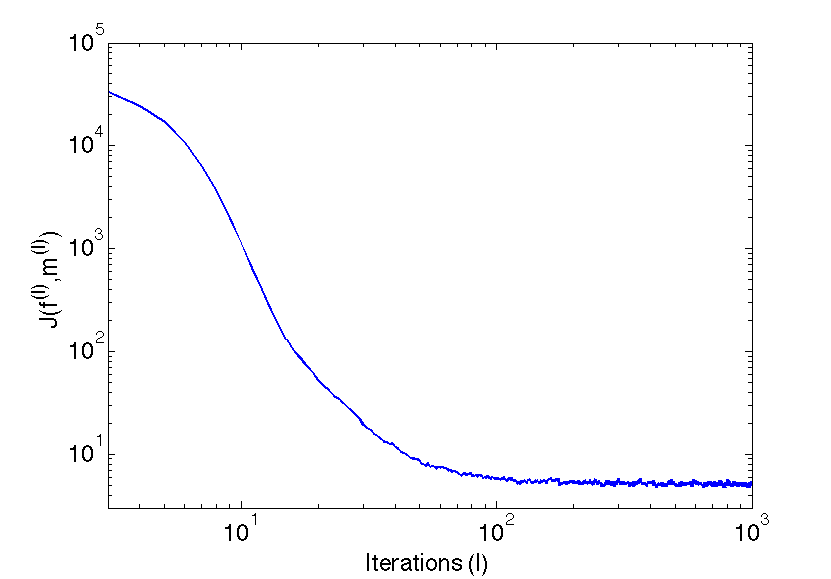}&
\includegraphics[trim=30 10 40 20,clip,width=0.31\textwidth]{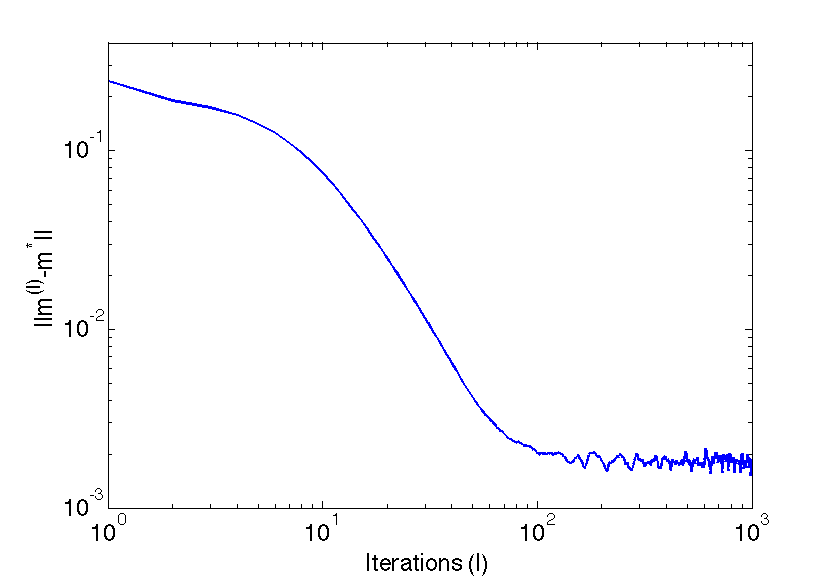}&
\includegraphics[trim=30 10 40 20,clip,width=0.31\textwidth]{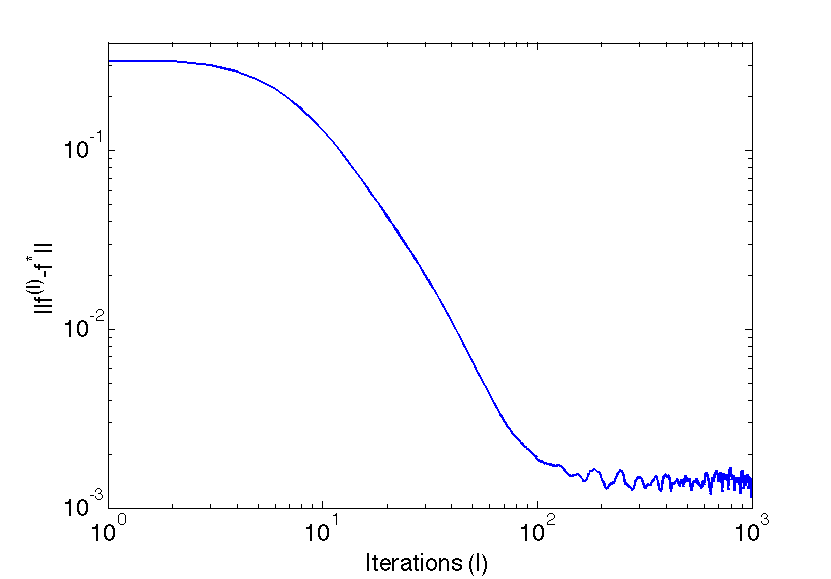}\vspace{0.1cm}\\
& $\Jfunc_\bet( \iter{m},\iter{f} )$ &  $\norm{m^\star-\iter{m}}$ & 
$\norm{f^\star-\iter{f}}$ 
\end{tabular}
\end{center}
\caption{\label{fig:evol_bump_beta} At each iteration $\ell$, we plot the value of the cost function $\Jfunc_\bet(\iter{m},\iter{f})$ and the distance between the reference solution $(m^\star,f^\star)$ and the estimation $(\iter{m},\iter{f})$. The first (resp. second) row presents the result with $\beta=1/2$ (resp. $\bet=3/4$).}
\end{figure}

As a second example, we show in Figure~\ref{fig:generalized_MK} the different morphings obtained between pictures of Gaspard Monge and Leonid Kantorovich. The grayscale representation scales linearly between black (value of 0) and white (value of 1)  and the dimensions are $N+1=75$, $M+1=100$, $P+1=60$, $M$ being the number of discrete points in the second spatial dimension.

\newcommand{\myfigMonge}[1]{\includegraphics[width=.13\linewidth]{images/MK/MK_beta#1}}

\renewcommand{\sidecap}[1]{ {\begin{sideways}\parbox{1.9cm}{\centering #1}\end{sideways}} }

\begin{figure}[!ht]
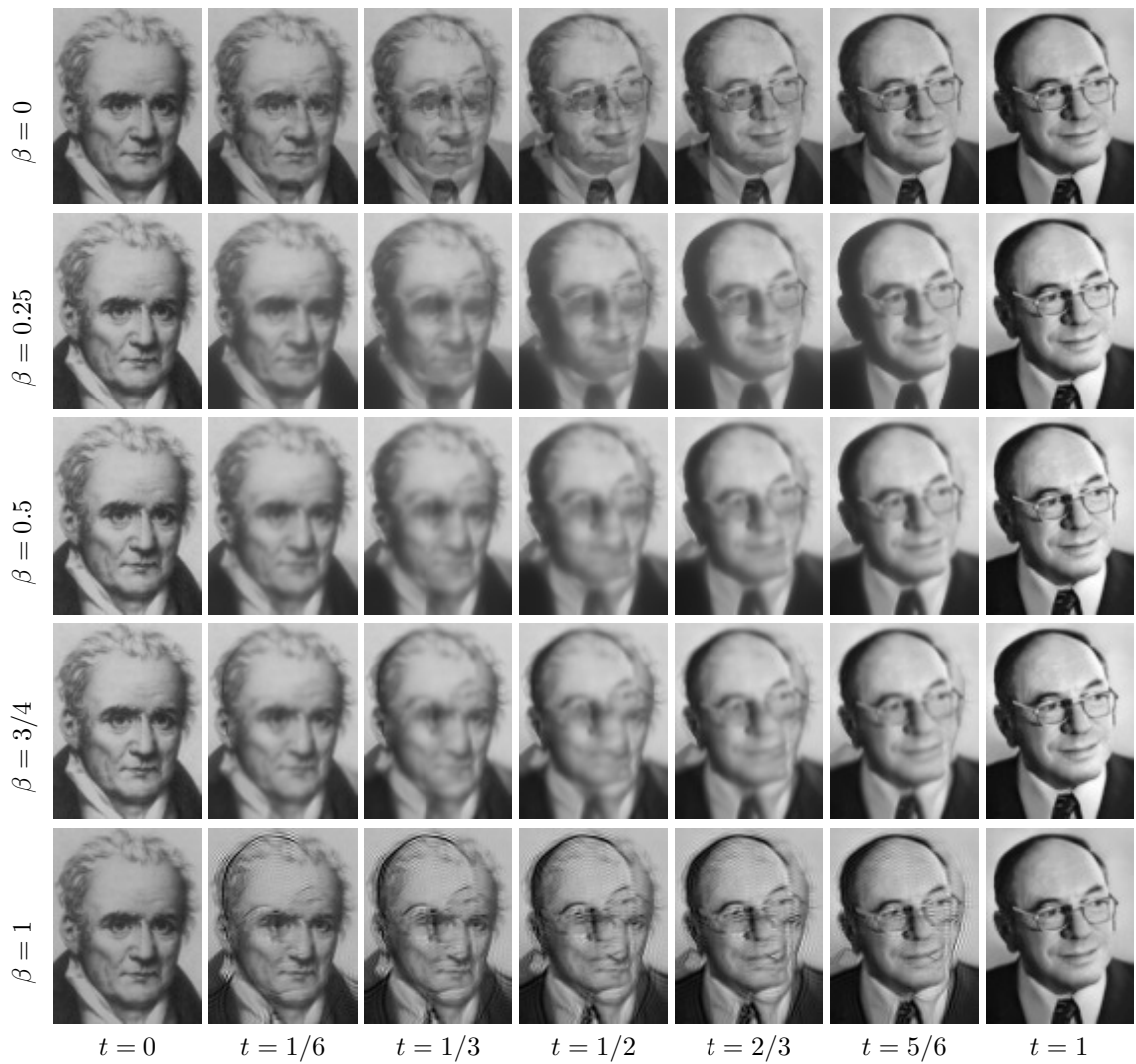

\begin{center}
\begin{tabular}{@{}c@{\hspace{1mm}}c@{\hspace{1mm}}c@{\hspace{1mm}}c@{\hspace{1mm}}c@{\hspace{1mm}}c@{\hspace{1mm}}c@{\hspace{1mm}}c@{}}
\sidecap{$\beta=0$ } &
\myfigMonge{0_01}&
\myfigMonge{0_11}&
\myfigMonge{0_21}&
\myfigMonge{0_31}&
\myfigMonge{0_41}&
\myfigMonge{0_51}&
\myfigMonge{0_61}\\
\sidecap{$\beta=0.25$ } &
\myfigMonge{025_01}&
\myfigMonge{025_11}&
\myfigMonge{025_21}&
\myfigMonge{025_31}&
\myfigMonge{025_41}&
\myfigMonge{025_51}&
\myfigMonge{025_61}\\
\sidecap{$\beta=0.5$ } &
\myfigMonge{05_01}&
\myfigMonge{05_11}&
\myfigMonge{05_21}&
\myfigMonge{05_31}&
\myfigMonge{05_41}&
\myfigMonge{05_51}&
\myfigMonge{05_61}\\
\sidecap{$\beta=3/4$ } &
\myfigMonge{075_01}&
\myfigMonge{075_11}&
\myfigMonge{075_21}&
\myfigMonge{075_31}&
\myfigMonge{075_41}&
\myfigMonge{075_51}&
\myfigMonge{075_61}\\
\sidecap{$\beta=1$ } &
\myfigMonge{1_01}&
\myfigMonge{1_11}&
\myfigMonge{1_21}&
\myfigMonge{1_31}&
\myfigMonge{1_41}&
\myfigMonge{1_51}&
\myfigMonge{1_61}\\
&$t=0$ & $t=1/6$ &
$t=1/3$ & $t=1/2$ &
$t=2/3$ & $t=5/6$ & $t=1$
\end{tabular}
\end{center}
\caption{\label{fig:generalized_MK} 
Evolution of $f^\star(\cdot,t)$ for several value of $t$ and $\be$.
 The first and last columns represent the data $f^0$ and $f^1$. The intermediate ones present the reference solution $f^\star(t)$ for successive times $t=i/6$, $i=1\cdots 5$. 
 Each line illustrates $f^\star$ for different values $\beta=j/4$, $j=0\cdots 4$ of the generalized cost function. }
\end{figure}

\subsection{Riemannian Transportation}
\label{subsec-riemanian-examples}

We investigate in this section the approximation of a displacement interpolation for a ground cost being the squared geodesic distance on a Riemannian manifold. This is achieved by solving~\eqref{eq-optim-bb-gen} with $\be=1$ but a non-constant weight map $w$.

We exemplify this setting by considering optimal transport with obstacles, which corresponds to choosing weights $w$ that are infinity on the obstacle $\Oo \subset \RR^d \times \RR$, i.e. 
\eq{
	\foralls k \in \Gc, \quad w_k = 1 + \iota_{\Oo}(x_k,t_k) \in \{1,+\infty\}.
}
Note that the obstacles can be dynamic, i.e. the weight $w$ needs not to be constant in time. 

Figure~\ref{labyrinthe} shows a first example where $\Oo$ is a 2-D ($d=2$) static labyrinth map (the walls of the labyrinth being the obstacles and are displayed in black). We use a $50 \times 50 \times 100$ discretization grid of the space-time domain $[0,1]^3$ and the input measures $(f^0,f^1)$ are Gaussians  with standard deviations equal to $0.04$. For Gaussians with such a small variance, this example shows that the displacement interpolation is located  closely to the geodesic path between the centers of the gaussians.

\newcommand{\myfigLab}[1]{\includegraphics[width=.195\linewidth]{images/labyrinthe/bump_obstacle#1}}

\begin{figure}[!ht]
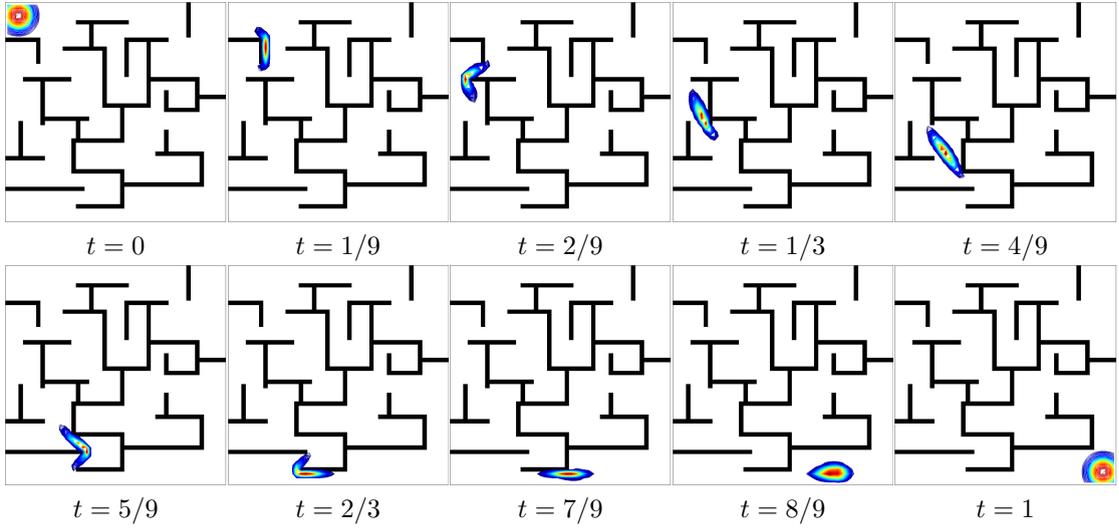

\begin{center}
\begin{tabular}{@{}c@{}c@{}c@{}c@{}c@{}}
\myfigLab{5_iso_001}&
\myfigLab{5_iso_012}&
\myfigLab{5_iso_023}&
\myfigLab{5_iso_034}&
\myfigLab{5_iso_045}\\
$t=0$&
$t=1/9$&
$t=2/9$&
$t=1/3$&
$t=4/9$\\
\myfigLab{5_iso_056}&
\myfigLab{5_iso_067}&
\myfigLab{5_iso_078}&
\myfigLab{5_iso_089}&
\myfigLab{5_iso_101}\\
$t=5/9$&
$t=2/3$&
$t=7/9$&
$t=8/9$&
$t=1$
\end{tabular}
\caption{\label{labyrinthe} 
Evolution of $f^\star(\cdot,t)$ for several values of $t$, using a Riemannian manifold with weights $w_k$ (constant in time) restricting the densities to lie within a 2-D static labyrinth map.  }
\end{center}
\vspace{3mm}
\end{figure}

Figure~\ref{labyrinthe2} shows a more complicated setting that includes a labyrinth with moving walls: a wall appears at time $t=1/4$ and another one disappears at time $1/2$. The difference with respect to the previous example is the fact that $w$ is now time dependent. This simple modification has a strong impact on the displacement interpolation. Indeed, the speed of propagation of the mean of the density is not constant anymore since the density measure is confined in a small area surrounded by walls for $t \in [1/4,1/2]$.

\begin{figure}[!ht]
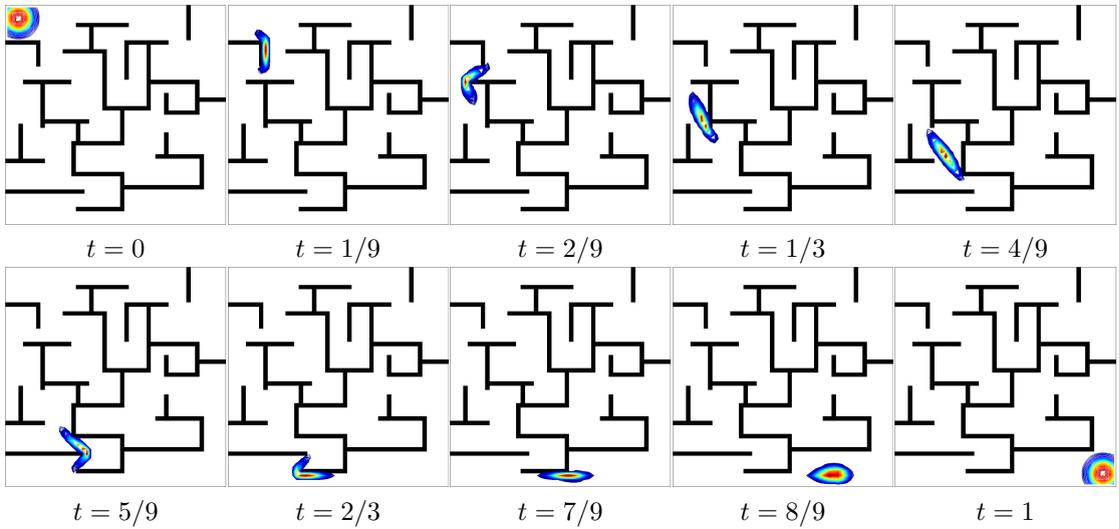

\begin{center}
\begin{tabular}{@{}c@{}c@{}c@{}c@{}c@{}}
\myfigLab{5_iso_001}&
\myfigLab{5_iso_012}&
\myfigLab{5_iso_023}&
\myfigLab{5_iso_034}&
\myfigLab{5_iso_045}\\
$t=0$&
$t=1/9$&
$t=2/9$&
$t=1/3$&
$t=4/9$\\
\myfigLab{5_iso_056}&
\myfigLab{5_iso_067}&
\myfigLab{5_iso_078}&
\myfigLab{5_iso_089}&
\myfigLab{5_iso_101}\\
$t=5/9$&
$t=2/3$&
$t=7/9$&
$t=8/9$&
$t=1$
\end{tabular}
\caption{\label{labyrinthe2} Evolution of $f^\star(\cdot,t)$ for several values of $t$, using a Riemannian  manifold with weights $w_k$ (evolving in time) restricting the densities to lie within a 2-D dynamic labyrinth map (i.e. with moving walls). }
\end{center}
\vspace{3mm}
\end{figure}

As a last example, we present in Figure~\ref{bretagne} an interpolation result in the context of  oceanography in the presence of coast. We here consider Gaussian mixture data in order to simulate the Sea Surface Temperature that can be observed from satellite. In order to model the influence of the sea ground height, we here considered weights $w$ varying w.r.t the distance to the coast. Denoting as $\Oo$ the area representing the complementary of the sea, we define
\eq{
	\foralls k \in \Gc, \quad w_k = 1 + d(x_k,\partial \Oo) + \iota_{\Oo} \in \{1,+\infty\},
} 
where $d(x,\partial \Oo)$ is the Euclidean distance between a pixel location $x$ and the boundary of $\Oo$. The estimation of such interpolations are of main interest in geophysic forecasting applications where the variables of numerical models are calibrated using external image observations (such as the Sea Surface Temperature). Data assimilation methods used in geophysics look for the best compromise between a model and the observations (see for instance~\cite{Blum2009}) and making use of optimal transportation methods in this context is an open research problem.

\newcommand{\myfigBret}[1]{\includegraphics[width=.195\linewidth]{images/bretagne/bretagne_#1}}

\begin{figure}[!ht]
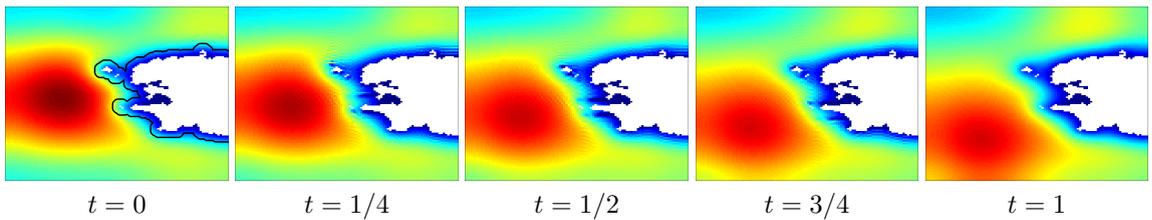

\begin{center}
\begin{tabular}{@{}c@{\hspace{1mm}}c@{\hspace{1mm}}c@{\hspace{1mm}}c@{\hspace{1mm}}c@{}}
\myfigBret{01a}&
\myfigBret{16}&
\myfigBret{32}&
\myfigBret{48}&
\myfigBret{64}\\
$t=0$&
$t=1/4$&
$t=1/2$&
$t=3/4$&
$t=1$
\end{tabular}
\caption{\label{bretagne} Evolution of $f^\star(\cdot,t)$ for several values of $t$, using a Riemannian  manifold with weights $w_k$ defined from the distance to the boundary of the sea domain (its frontier is displayed in black in the first figure). }
\end{center}
\vspace{3mm}
\end{figure}

\section*{Conclusion}

In this article, we have shown how proximal splitting schemes offer an elegant and unifying framework to describe computational methods to solve the dynamical optimal transport with an Eulerian discretization. This allowed use to extend the original method of Benamou and Brenier in several directions, most notably the use of staggered grid discretization and the introduction of generalized, spatially variant, cost functions. 

\section*{Acknowledgment}

We would like to thank to Jalal Fadili for his detailed explanations of the connexion between the ADMM and DR algorithms. Nicolas Papadakis would like to acknowledge partial support from the LEFE program of INSU (CNRS). 
Gabriel Peyr\'e acknowledges support from the European Research Council (ERC project SIGMA-Vision). This work was also partially funded by the French Agence Nationale de la Recherche (ANR, Project TOMMI) under reference ANR-11-BS01-014-01.


\bibliographystyle{plain}  
\bibliography{refs}

\end{document}